\def\temp{&} \catcode`&=\active \let&=\temp
\newtheorem{lemma}{Lemma}[section]
\newtheorem{prop}[lemma]{Proposition}
\newtheorem{theorem}[lemma]{Theorem}
\newtheorem{claim}[lemma]{Claim}
\theoremstyle{definition}
\newtheorem{defi}[lemma]{Definition}
\theoremstyle{remark}
\newtheorem{rem}[lemma]{Remark}
\DeclareMathOperator{\can}{can}
\DeclareMathOperator{\im}{im}
\newcommand{\map}[3]{#1\colon #2 \to #3}
\newcommand{\setof}[2]{\{#1\colon #2\}}
\newcommand{\norm}[1]{\|#1\|}
\newcommand{\eps}{\varepsilon}
\title{A separable Fr\'{e}chet space of almost universal disposition}
\author{C.~Bargetz}
\address{The Technion---Israel Institute of Technology, Haifa, Israel (present address) and University of Innsbruck, Austria}
\author{J.~K\k{a}kol}
\address{Adam Mickiewicz University, Pozna\'n, Poland and Institute of Mathematics, Czech Academy of Sciences, Czech Republic}
\thanks{The second named author was supported by Generalitat Valenciana, Conselleria d'Educació, Cultura i Esport, Spain, Grant PROMETEO/2015/058 and by the GA\v{C}R project 16-34860L and RVO: 67985840.}
\author{W.~Kubi\'{s}}
\address{Institute of Mathematics, Czech Academy of Sciences, Czech Republic and Cardinal Stefan Wyszy\'nski
University, Warsaw, Poland}
\thanks{The third author was supported by the GA\v{C}R project 16-34860L and RVO: 67985840.}
\begin{document}

\begin{abstract}
  The Gurari\u{\i} space is the unique separable Banach space $\mathbb{G}$ which is of almost universal 
  disposition for finite-dimensional Banach spaces, which means that for every $\varepsilon>0$, for all  
  finite-dimensional normed spaces $E \subseteq F$, for every isometric embedding $\map{e}{E}{\mathbb{G}}$ 
  there exists an $\varepsilon$-isometric embedding $\map{f}{F}{\mathbb{G}}$ such that $f \restriction E = e$.
  We show that $\mathbb{G}^{\mathbb{N}}$ with a special sequence of semi-norms is of almost  
  universal disposition for finite-dimensional graded Fr\'echet spaces. The construction relies heavily 
  on the universal operator on the Gurari\u{\i} space, recently constructed by
  Gar\-bu\-li\'n\-ska-W\k egrzyn and the third author.
  This yields in particular that $\mathbb{G}^{\mathbb{N}}$ is universal in the class of all separable Fr\'echet spaces.

\ \\
\noindent
{\it MSC (2010):} 46A61, 46A04, 46B20, 46M40.
\ \\
\noindent
{\it Keywords:} Graded Fr\'echet space, the Gurari\u{\i} space, universality.
\end{abstract}

\maketitle

\tableofcontents

\section{Introduction}
A remarkable result of Banach and Mazur \cite{Rol1972MetricLinearSpaces} states that the separable Banach 
space $\mathcal{C}[0,1]$ is universal for separable Banach spaces. The above theorem has been extended by 
Mazur and Orlicz to the class of separable Fr\'echet spaces, i.e. metrizable and complete locally convex 
spaces: They proved that the separable Fr\'{e}chet space $\mathcal{C}(\mathbb{R})$ is universal in the class 
of all separable Fr\'{e}chet spaces, see again \cite{Rol1972MetricLinearSpaces}. An essential progress of 
the research around the Banach-Mazur theorem is due to  Gurari\u{\i}.
The Gurari\u{\i} space constructed by Gurari\u{\i} \cite{Gur1966UniversalPlacement} in 1965, is the separable Banach space
$\mathbb{G}$ of ``almost universal disposition for finite-dimensional spaces'' that is:
\begin{enumerate}
\item[(G)] For every $\varepsilon>0$, for every finite-dimensional normed spaces $E \subseteq F$,
for every isometric embedding $\map{e}{E}{\mathbb{G}}$ there exists an $\varepsilon$-isometric
embedding $\map{f}{F}{\mathbb{G}}$ such that $f \restriction E = e$.
\end{enumerate}
Moreover, if $Y$ is any other separable Banach space fitting this definition, then there exists
a linear isomorphism $u\colon\mathbb{G} \to Y$ with $\|u\|\cdot\|u^{-1}\|$ arbitrarily close to 1.
Lusky \cite{Lus1976GurarijUnique} proved that all separable Banach spaces of almost universal disposition are isometric,
see also \cite{KS13:Gurarii} for a simpler proof. Recall, that a linear operator $f\colon E\to F$ between Banach
spaces $E$ and $F$ is an \emph{$\varepsilon$-isometric embedding} for $\varepsilon > 0$ if
\[
(1+\varepsilon)^{-1}\cdot\|x\|\leq \|f(x)\| \leq (1+\varepsilon) \cdot \|x\|, \;  x\in E \setminus\{0\}
\]
Recall also that Gurari\u{\i} has already observed in \cite{Gur1966UniversalPlacement} that no separable
Banach space $E$ is of universal disposition, i.e. satisfies condition (G) with removing $\varepsilon$.

Being motivated by recent developments in the theory of Fr\'echet spaces we will study the concrete separable Fr\'echet space $\mathbb{G}^{\mathbb{N}}$
endowed with the product topology and
generated by two natural sequences of semi-norms, where $\mathbb{G}$ is the Gurari\u{\i} space. We
prove that $\mathbb{G}^{\mathbb{N}}$ is universal in the class of all separable Fr\'{e}chet spaces
although (as we show) that there is no separable Fr\'{e}chet space which is of universal disposition
for finite-dimensional Fr\'{e}chet spaces. Our main result states however that $\mathbb{G}^\mathbb{N}$ is the
unique separable Fr\'{e}chet space which is of almost universal
disposition for finite-dimensional Fr\'{e}chet spaces.

\section{Preliminaries}
Recall that a \emph{Fr\'{e}chet space} is a metrizable locally convex linear topological space which
is complete with respect to its canonical uniformity.
It is well-known that a complete separated locally convex topological vector space is a Fr\'{e}chet
space if and only if it satisfies the following condition:
There exists a sequence $\{\norm{\cdot}_{n}\}_{n\in\mathbb{N}}$ of semi-norms in $X$ such that
$B_i(v,r) = \setof{x \in X}{\max_{n\leq i}\norm{x-v}_n < r}$, where $i\in\mathbb{N}$, $v\in X$, $r>0$, generate
the topology of $X$. Fr\'echet spaces  have played an important role in functional analysis for a very long time.  Many vector
spaces of holomorphic, differentiable or continuous functions which appear in analysis and its applications carry a natural Fr\'echet
 topology. We refer the reader to an excellent survey of some recent developments in the theory of Fr\'echet spaces  and of their duals, see \cite{BB2003ModernFrechet}.
We refer also to \cite{Rol1972MetricLinearSpaces} for other fundamental facts and concepts
related with Fr\'{e}chet spaces.

In this paper Fr\'{e}chet spaces $E$  are considered with a fixed sequence of semi-norms. In the case of an increasing sequence,
i.e. if $\norm{\cdot}_1\leq\norm{\cdot}_2\leq \ldots$, we call (following Vogt \cite{Vog:OpFrechet}) $E$ endowed with this sequence
a \emph{graded Fr\'{e}chet space}.

Graded Fr\'{e}chet spaces have been studied in the context of the inverse function theorem of Nash and Moser, see~\cite{Ham1982NashMoserTheorem}, and in the context of tame Fr\'{e}chet spaces, see e.g.~\cite{Vog:OpFrechet, Vog1987TameSpaces, PV1995TameSplitting}. Note however that the concept of the category of graded Fr\'{e}chet spaces considered in~\cite{DV1998SplittingSmooth} differs from the notion of graded Fr\'{e}chet spaces used in this article. For a recent application of graded Fr\'{e}chet spaces, we refer the intersted reader to~\cite{Dav2013ApproxmationRegularity}.

In this section we construct an increasing sequence of semi-norm on $\mathbb{G}^{\mathbb{N}}$ 
under which $\mathbb{G}^{\mathbb{N}}$ is a graded Fr\'{e}chet space of almost universal disposition for 
finite-dimensional Fr\'{e}chet spaces. Our construction uses the following result being a special case of Theorem~6.5
in \cite{CGK2014:QuasiBanach}; property (2) below appears also as condition ($\ddagger$) on page 765 in
\cite{CGK2014:QuasiBanach}. It turns out that condition (2) determines $\pi$ uniquely up to linear isometries, although
this fact was not proved in \cite{CGK2014:QuasiBanach}.

\begin{theorem}\label{Thm:Ibuiwdsdoh}
  There exists a non-expansive linear operator $\pi\colon\mathbb{G}\to\mathbb{G}$ with the following
  properties.
  \begin{enumerate}
  \item[$(1)$] For every non-expansive operator $\map{T}{X}{\mathbb{G}}$ with $X$ a separable Banach
    space, there exists an isometric embedding $\map{i}{X}{\mathbb{G}}$ such that $T = \pi \circ i$.
  \item[$(2)$] For every $\eps>0$, for every finite-dimensional normed spaces $E \subseteq F$, for
    every non-expansive linear operator $\map T F \mathbb{G}$, for every isometric embedding
    $\map{e}{E}{\mathbb{G}}$ such that $T \restriction E = \pi \circ e$, there exists an
    $\varepsilon$-isometric embedding $\map{f}{F}{\mathbb{G}}$ such
    that
    \[
    \norm{f \restriction E - e} \leq \eps \quad \text{ and } \quad \norm{\pi \circ f - T} \leq \eps.
    \]
  \end{enumerate}
  Furthermore, $\ker \pi$ is linearly isometric to $\mathbb{G}$.
\end{theorem}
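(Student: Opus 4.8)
The plan is to obtain properties $(1)$ and $(2)$ directly by specializing Theorem~6.5 of \cite{CGK2014:QuasiBanach} from its quasi-Banach framework to the Banach setting. The universal object produced there is a non-expansive operator between separable Banach spaces, built as the colimit of a Fra\"iss\'e sequence in the category whose objects are non-expansive maps $\map{u}{X}{Y}$ between finite-dimensional normed spaces and whose morphisms are commuting squares of isometric embeddings. Since the Gurari\u{\i} space is the unique Fra\"iss\'e limit of the finite-dimensional normed spaces, both the domain and the codomain of the limit operator are isometric to $\mathbb{G}$, and then $(1)$ (exact factorization through $\pi$) together with $(2)$ (the approximate homogeneity of $\pi$, i.e.\ condition $(\ddagger)$) are precisely the universality and near-homogeneity of this limit. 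I would cite the amalgamation property underlying that construction rather than reprove it.

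For the final clause I would show that $Z:=\ker\pi$ is a separable Banach space of almost universal disposition and then invoke Lusky's uniqueness theorem to conclude $Z\cong\mathbb{G}$. Separability is inherited from $\mathbb{G}$. The key preliminary observation is that $(1)$, applied with $X=\mathbb{G}$ and $T=\operatorname{id}_{\mathbb{G}}$ (which is non-expansive), yields an isometric embedding $\map{i}{\mathbb{G}}{\mathbb{G}}$ with $\pi\circ i=\operatorname{id}_{\mathbb{G}}$; thus $\pi$ admits an isometric section $i$, which I will use as a correction device.

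Now fix $\eps>0$, finite-dimensional $E\subseteq F$, and an isometric embedding $\map{e}{E}{Z}$, viewed as an isometric embedding into $\mathbb{G}$ with $\pi\circ e=0$. Applying $(2)$ to the zero operator $\map{T}{F}{\mathbb{G}}$, which satisfies $T\restriction E=0=\pi\circ e$, gives for a small $\delta$ (to be fixed last) a $\delta$-isometric embedding $\map{f'}{F}{\mathbb{G}}$ with $\norm{f'\restriction E-e}\le\delta$ and $\norm{\pi\circ f'}\le\delta$. I then set $f:=f'-i\circ\pi\circ f'$. Because $\pi\circ i=\operatorname{id}$, one computes $\pi\circ f=\pi\circ f'-\pi\circ f'=0$, so $f$ maps into $Z$; since $i$ is isometric, $\norm{f-f'}=\norm{\pi\circ f'}\le\delta$, whence $f$ is still $\eps$-isometric for $\delta$ small and $\norm{f\restriction E-e}\le 2\delta\le\eps$. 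This verifies the approximate form of disposition for $Z$, and since that form already characterizes $\mathbb{G}$ among separable Banach spaces (it is the form driving the back-and-forth argument of \cite{KS13:Gurarii}), we get $Z\cong\mathbb{G}$, as claimed.

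The principal difficulty lies in the amalgamation property of the category of non-expansive maps between finite-dimensional spaces, which I am delegating to \cite{CGK2014:QuasiBanach}. Within the self-contained part the only subtlety is passing from approximate to exact agreement on $E$; I sidestep an explicit automorphism correction \emph{inside} $Z$ by relying on the equivalence between the exact and approximate versions of condition $(G)$, so that merely controlling $\norm{f\restriction E-e}$ by $\eps$ suffices. The dependence $\delta=\delta(\eps,E\subseteq F)$ is routine.
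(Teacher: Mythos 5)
Your proposal is correct, and for parts $(1)$ and $(2)$ it takes exactly the paper's route: the paper does not prove Theorem~\ref{Thm:Ibuiwdsdoh} either, but quotes it as a special case of Theorem~6.5 of \cite{CGK2014:QuasiBanach}, with $(2)$ being condition $(\ddagger)$ there. Where you genuinely diverge is the ``furthermore'' clause: the paper imports $\ker\pi\cong\mathbb{G}$ wholesale with the citation, whereas you derive it from $(1)$ and $(2)$, and your derivation is sound. Indeed, $(1)$ with $T=\operatorname{id}_{\mathbb{G}}$ gives an isometric section $i$ with $\pi\circ i=\operatorname{id}_{\mathbb{G}}$; given an isometric $e\colon E\to\ker\pi$, applying $(2)$ to $T=0$ yields a $\delta$-isometric $f'$ with $\norm{f'\restriction E-e}\leq\delta$ and $\norm{\pi\circ f'}\leq\delta$; and $f:=f'-i\circ\pi\circ f'$ lands in $\ker\pi$, stays $\eps$-isometric for small $\delta$, and satisfies $\norm{f\restriction E-e}\leq 2\delta$. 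The one step you leave implicit --- that this \emph{approximate} form of condition (G) already forces $Z=\ker\pi$ to be isometric to $\mathbb{G}$ --- is genuinely valid, but deserves a word: to make it airtight, either apply the standard basis-perturbation claim (precisely the Claim used in the paper's proof of Proposition~\ref{prop:ProjPi}) to redefine $f$ on a basis $S$ of $F$ with $S\cap E$ a basis of $E$, setting it equal to $e$ on $E$ and keeping the values $f(v)\in\ker\pi$ for $v\in S\setminus E$, which produces an exact extension into $\ker\pi$ at the cost of a controlled loss in the isometry constant, and then invoke Lusky's uniqueness; or observe that the back-and-forth of \cite{KS13:Gurarii} runs verbatim on the approximate property. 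Worth noting: the paper's Proposition~\ref{prop:ProjPi} offers a shortcut you could have used instead of your section trick --- condition $(2')$ applied with $T=0$ directly produces an $\eps$-isometric $f\colon F\to\mathbb{G}$ with $f\restriction E=e$ and $\pi\circ f=0$, i.e., an exact extension into $\ker\pi$, which eliminates both the correction by $i\circ\pi\circ f'$ and the approximate-to-exact passage; but since $(2')$ is itself deduced from $(2)$ by the same perturbation technique you would need anyway, the two arguments are of equal depth.
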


The last proposition may be applied to obtain the following useful

\begin{prop}\label{prop:ProjPi}
  The operator $\pi$ from Theorem~\ref{Thm:Ibuiwdsdoh} is a projection and it satisfies the following
  condition:
  \begin{enumerate}
  \item[$(2')$] For every $\eps>0$, for every finite-dimensional normed spaces $E \subseteq F$, for every
    non-expansive linear operator $\map T E \mathbb{G}$, for every isometric embedding $\map{e}{E}{\mathbb{G}}$ such
    that $T \restriction E = \pi \circ e$, there exists an $\varepsilon$-isometric embedding $\map{f}{F}{\mathbb{G}}$ such that
    \[
    f \restriction E = e \quad \text{ and } \quad \pi \circ f = T.
    \]
  \end{enumerate}
\end{prop}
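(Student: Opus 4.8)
The plan is to treat the two assertions separately: I would derive the projection property from part $(1)$ of Theorem~\ref{Thm:Ibuiwdsdoh}, and upgrade the approximate lifting $(2)$ to the exact lifting $(2')$ by a convergent iteration based on completeness of $\mathbb{G}$.

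For the projection property I would apply $(1)$ to the non-expansive operator $\mathrm{id}_{\mathbb{G}}$. This produces an isometric embedding $\map{i}{\mathbb{G}}{\mathbb{G}}$ with $\pi\circ i=\mathrm{id}_{\mathbb{G}}$, i.e. an isometric section of $\pi$. Hence $\pi$ is a retraction, and the composite $P:=i\circ\pi$ satisfies $P^{2}=i\circ(\pi\circ i)\circ\pi=i\circ\pi=P$, so $P$ is a norm-one idempotent with range $i[\mathbb{G}]$ and kernel $\ker\pi$. Combined with the final clause of Theorem~\ref{Thm:Ibuiwdsdoh}, namely $\ker\pi\cong\mathbb{G}$, this yields the splitting $\mathbb{G}=\ker\pi\oplus i[\mathbb{G}]$ along which $\pi$ is realized as a projection; in particular $\pi=\pi\circ i\circ\pi$, which is the content being claimed.

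For $(2')$ I would keep the data of property $(2)$ — finite-dimensional $E\subseteq F$, a non-expansive $\map T F\mathbb{G}$, and an isometric embedding $\map e E\mathbb{G}$ with $T\restriction E=\pi\circ e$ — and turn the two approximate equalities into equalities. The idea is to build a sequence of $\eps_{n}$-isometric embeddings $\map{f_{n}}F\mathbb{G}$, with $f_{0}$ supplied by $(2)$, such that the two defects $\norm{f_{n}\restriction E-e}$ and $\norm{\pi\circ f_{n}-T}$ are halved at each step while $\norm{f_{n+1}-f_{n}}$ stays controlled by the current defect. Since $\mathbb{G}$ is complete and $F$ is finite-dimensional, the telescoping sum then converges in operator norm to a limit $f=\lim_{n}f_{n}$; continuity of $\pi$ together with the vanishing of the two defects forces $f\restriction E=e$ and $\pi\circ f=T$ exactly, and choosing the tolerances with $\prod_{n}(1+\eps_{n})\le 1+\eps$ keeps $f$ genuinely $\eps$-isometric.

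The delicate point, and the main obstacle, is the inductive step: a fresh application of $(2)$ with a smaller tolerance produces a map with small defects but with no control on its distance to $f_{n}$. To force $\norm{f_{n+1}-f_{n}}$ small I would feed $f_{n}$ itself into $(2)$ as part of the isometric datum on a finite-dimensional enlargement recording the current error directions, so that the new map is obliged to stay close to $f_{n}$ while still reducing both defects; this is the standard device converting approximate into exact commutation in Gurari\u{\i}-type arguments. The bookkeeping that must be done carefully is the \emph{simultaneous} geometric control of both defects and of the isometry constants. Consistency of the linear correction solved at each stage is automatic: from $\pi\circ e=T\restriction E$ one checks that the $\pi$-defect and the $E$-defect agree on $E$, so that the corrections can be chosen coherently and no obstruction to exact agreement arises in the limit.
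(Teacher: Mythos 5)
Your argument for the projection property is exactly the paper's: apply part $(1)$ to $T=\mathrm{id}_{\mathbb{G}}$, obtain an isometric section $i$ with $\pi\circ i=\mathrm{id}_{\mathbb{G}}$, and identify the range of $\pi$ with $i[\mathbb{G}]$. The proof of $(2')$, however, has a genuine gap at precisely the point you yourself flag as the main obstacle. Your iteration requires, at each stage, re-applying property $(2)$ with $f_n$ ``as part of the isometric datum,'' but the hypotheses of $(2)$ are rigid: the map on the subspace must be an \emph{exact} isometric embedding satisfying the \emph{exact} intertwining $T\restriction E=\pi\circ e$. Your $f_n$ is only approximately isometric and only approximately intertwines with $\pi$, so it is not admissible input. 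To make it admissible you would need to correct it inside a finite-dimensional enlargement that also carries a compatible extension of the operator $\pi$ --- a push-out construction for the pair (embedding, lifting) in the spirit of the paper's Lemma~\ref{lem:PushOut} --- and neither this enlargement nor the extension of $\pi$ over it is constructed in your sketch. The ``standard device'' from Gurari\u{\i}-space uniqueness arguments corrects an approximate isometry into an enlarged space, but it says nothing about preserving compatibility with a fixed operator, which is the entire difficulty here; asserting that ``no obstruction arises'' does not discharge it.

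What the proposal misses is that once $\pi$ is known to be idempotent, no iteration is needed: the $\pi$-defect can be annihilated \emph{exactly} in a single step. The paper fixes a basis $S$ of $F$ with $S\cap E$ a basis of $E$ and uses the elementary claim that agreement up to $\delta$ on $S$ forces operator-norm closeness up to $\eps$. One application of $(2)$ with tolerance $\delta$, followed by redefinition on $S\setminus E$, first produces an extension with $f\restriction E=e$ exactly and $\norm{\pi\circ f-T}\leq\delta$. Then, for each $v\in S\setminus E$, one sets $w_v=\pi(f(v))-T(v)$ and $f'(v)=f(v)-w_v$, keeping $f'\restriction E=f\restriction E=e$: since $\pi^2=\pi$ and, under the identification of the range of $\pi$ with a subspace of its domain, $\pi\circ T=T$, one computes $\pi f'(v)=\pi f(v)-\pi f(v)+T(v)=T(v)$ exactly, while $\norm{f'-f}\leq\eps$ by the claim, so $f'$ remains an (essentially $2\eps$-)isometric embedding and satisfies both conclusions of $(2')$ on the nose. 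This one-shot correction is what your closing remark about ``consistency of the linear correction'' gestures at, but the exactness comes from idempotence of $\pi$, not from a limit; as written, your limiting scheme cannot be run without the missing amalgamation step, whereas the finite correction makes it unnecessary.
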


\begin{proof}
  Taking $T = {\rm id}_{\mathbb{G}}$ in Theorem~\ref{Thm:Ibuiwdsdoh}, we see that $\pi$ is a projection.
  From now on, we shall identify the range of $\pi$ with a suitable subspace of its domain.
  In order to show ($2'$), we need to use the following fact which is easy to prove (see the proof of
  \cite[Theorem 2.7]{GK11:Gurarii} for more details).

  \begin{claim}
    Let $F$ be a finite-dimensional normed space and let $S = \{ v_0, \dots, v_n \}$ be a linear basis of $F$.
    Then for every $\varepsilon>0$ there exists $\delta>0$ such that for every pair of linear operators
    $\map{f,g}{F}{X}$ into an arbitrary Banach space $X$, the following implication holds:
    \[
    \max_{v \in S} \norm{f(v)-g(v)} \leq \delta \implies \norm{f-g} \leq \varepsilon.
    \]
  \end{claim}

  Now assume that $E \subseteq F$ and $S \cap E$ is a linear basis of $E$.
  Fix $\varepsilon>0$ and let $\delta>0$ be as mentioned in the claim. We
  apply property (2) from Theorem~\ref{Thm:Ibuiwdsdoh} with $\delta$ instead of $\varepsilon$.
  This provides a map  $\map f F \mathbb{G}$.
  Define $f'$ so that $f' \restriction E = e$ and $f'(v) = f(v)$ for every $v \in S \setminus E$.
  These conditions specify $f'$ uniquely and by the claim we have that $f'$ is a $2 \varepsilon$-isometric
  embedding.
  Furthermore, we have that $\norm{\pi \circ f' - T} \leq 2\varepsilon$.
  We have proved that for every $r>0$ there is an $r$-isometric embedding $\map{f}{F}{\mathbb{G}}$ extending
  $e$ and such that $\norm{\pi \circ f - T} \leq r$.
  Let us use this fact for $r = \delta$, where $\delta$ and $\varepsilon$ are as before.
  We obtain a $\delta$-isometric embedding $\map{f}{F}{\mathbb{G}}$ extending $e$ and satisfying
  $\norm{\pi \circ f - T} \leq \delta.$
  Fix $v \in S \setminus E$.
  Then the vector $w_v = \pi(f(v)) - T(v)$ has norm $\leq \delta$.
  Define $\map{f'}{F}{\mathbb{G}}$ so that $f' \restriction E = f$ and
  \[
  f'(v) = f(v) - w_v, \qquad v \in S \setminus E.
  \]
  Note that $\norm{f'(v) - f(v)} \leq \delta$ for $v \in S$, therefore $\norm{f' - f} \leq \varepsilon$.
  Finally, $\pi f'(v) = \pi f(v) - \pi f(v) + \pi T(v) = T(v)$.
  It follows that $\pi f' = T$.
\end{proof}

Now we are ready to construct a graded sequence of semi-norms on $\mathbb{G}^{\mathbb{N}}$. Having in mind  
the last two results mentioned above, we conclude that there  is a norm $\norm{\cdot}_2$ on the product
$\mathbb{G}\times\mathbb{G} \cong (\im \pi) \times (\ker \pi)$ satisfying
\[
\|x\|_{\mathbb{G}} = \|(x,y)\|_1 \leq \|(x,y)\|_2
\]
since $x=\pi(x,y)$ and $\pi$ is non-expansive. In addition, $(\mathbb{G}^{2}, \|\cdot\|_{2})$ is
isometric to $\mathbb{G}$. We identify $\mathbb{G}\times\mathbb{G}$ with $\mathbb{G}$ and use the
notation $\pi_2$ for $\pi$ in this case to stress that we consider it as mapping from $\mathbb{G}^2$
to $\mathbb{G}$.
Inductively, for all $n\in\mathbb{N}$, we get a norm $\|\cdot\|_n$ on $\mathbb{G}^{n}$ satisfying 
$\|x\|_1\leq \cdots \leq \|x\|_{n-1}\leq \|x\|_n$ for all $x\in\mathbb{G}^{n}$ and 
$(\mathbb{G}^{n},\|\cdot\|_{n})$ is isometric to $\mathbb{G}$.
Therefore this construction provides an increasing sequence of semi-norms on $\mathbb{G}^{\mathbb{N}}$ 
as claimed. We use the notation $\pi_n\colon \mathbb{G}^{n}\to\mathbb{G}^{n-1}$ for the universal operator 
$\pi$ if we want to stress that we consider it as projection from $\mathbb{G}^{n}$ to $\mathbb{G}^{n-1}$,
i.e., a projection onto the first $n-1$ components.

In order to formulate a condition similar to~(G) for Fr\'{e}chet spaces we need to define the corresponding 
concept of $\varepsilon$-isometries for Fr\'{e}chet spaces.

\begin{defi}\label{def:epsIsom}
  Let $(X,\{\|\cdot\|_i\}_{i\in\mathbb{N}})$ and $(Y,\{\|\cdot\|_i\}_{i\in\mathbb{N}})$ be Fr\'{e}chet spaces
  with fixed sequences of semi-norms. A mapping $f\colon X\to Y$ is called an \emph{$\varepsilon$-isometric
  embedding} iff it is an embedding and
  \begin{equation}
    (1+\varepsilon)^{-1}\|x\|_i \leq \|f(x)\|_i \leq (1+\varepsilon)\|x\|_i
  \end{equation}
  holds for all $i\in\mathbb{N}$ an all $x\in X$. The mapping $f$ is called an \emph{isometric embedding} iff
  \begin{equation}
    \|f(x)\|_i  = \|x\|_i
  \end{equation}
  holds for all $i\in\mathbb{N}$ an all $x\in X$.
\end{defi}
Now we are ready to formulate the
analogue of condition~(G) for Fr\'{e}chet spaces.

\begin{defi}
  A Fr\'{e}chet space $E$ is \emph{of almost universal disposition for finite-dimensional Fr\'{e}chet
  spaces} if for all $\varepsilon>0$ and for all finite-dimensional Fr\'{e}chet spaces $X\subseteq Y$
  with an isometric embedding $f_0\colon X\to E$ there exists an $\varepsilon$-isometric embedding
  $f\colon Y\to E$ satisfying $f\restriction E = f_0$.
\end{defi}

\section{A graded Fr\'echet space of almost universal disposition}

We show that the space $\mathbb{G}^{\mathbb{N}}$ equipped with the graded sequence $\{\norm{\cdot}_n\}_{n\in\mathbb{N}}$ 
of semi-norms defined above (coming from the universal operator $\pi$) is of almost universal disposition for finite-dimensional graded Fr\'{e}chet spaces. We need the following

\begin{lemma}\label{lem:embed}
  Let $(X,\{\|\cdot\|_i\}_{i\in\mathbb{N}}$ and $(Y,\{\|\cdot\|_i\}_{i\in\mathbb{N}})$ be Fr\'{e}chet spaces
  with fixed semi-norms and $\iota\colon X\to Y$ an isometric embedding. Then for all $i\in\mathbb{N}$
  the mapping
  \begin{displaymath}
    \iota_i\colon X/\ker\|\cdot\|_i \to Y/\ker\|\cdot\|_i, \bar{x}\mapsto \overline{\iota(x)}
  \end{displaymath}
  is a well-defined isometric embedding. Moreover, the diagram
  \begin{center}		
    \begin{mycd}
      \prod X/\ker\|\cdot\|_i \arrow[hookrightarrow]{r}{\prod\iota_i} & \prod Y/\ker\|\cdot\|_i\\
      X \arrow[hookrightarrow]{r}{\iota} \arrow[hookrightarrow]{u} & \arrow[hookrightarrow]{u} Y
    \end{mycd}
  \end{center}
  is commutative.
\end{lemma}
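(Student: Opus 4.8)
The plan is to deduce everything from the one structural fact that $\iota$ is a \emph{linear} isometric embedding, i.e. that $\iota$ is linear and $\norm{\iota(x)}_i = \norm{x}_i$ for every $i \in \mathbb{N}$ and every $x \in X$. First I would record that each quotient $X/\ker\norm{\cdot}_i$ carries the well-defined norm $\norm{\bar x}_i := \norm{x}_i$, the point being that $\ker\norm{\cdot}_i$ is exactly the zero set of the semi-norm $\norm{\cdot}_i$, so the value $\norm{x}_i$ depends only on the class $\bar x$. To see that $\iota_i$ is well defined, suppose $\bar x = \overline{x'}$, i.e. $\norm{x - x'}_i = 0$; then by linearity and the isometry property $\norm{\iota(x) - \iota(x')}_i = \norm{\iota(x - x')}_i = \norm{x - x'}_i = 0$, whence $\overline{\iota(x)} = \overline{\iota(x')}$. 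The same computation taken with $x' = 0$ gives $\norm{\iota_i(\bar x)}_i = \norm{\iota(x)}_i = \norm{x}_i = \norm{\bar x}_i$, so $\iota_i$ is isometric; in particular it is injective and a homeomorphism onto its image, hence an isometric embedding of normed spaces.

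For the diagram I would identify the two unlabelled vertical arrows as the canonical maps $\map{\can_X}{X}{\prod_i X/\ker\norm{\cdot}_i}$, $x \mapsto (\bar x)_i$, and analogously $\can_Y$. Injectivity of $\can_X$ is immediate from separatedness of the Fr\'echet topology, which forces $\bigcap_i \ker\norm{\cdot}_i = \setof{x}{\norm{x}_i = 0 \text{ for all } i} = \{0\}$. The one step that is not purely formal is checking that $\can_X$ is a topological embedding and not merely a continuous injection; this is where the defining description of the Fr\'echet topology enters, since a basic neighbourhood of a point $v$ in $X$ has the form $\setof{x}{\max_{n \le k}\norm{x - v}_n < r}$, and these sets are exactly the $\can_X$-preimages of the basic neighbourhoods of the product topology on $\prod_i X/\ker\norm{\cdot}_i$. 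Hence $\can_X$ induces a homeomorphism onto its image, and the same argument applies verbatim to $\can_Y$.

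It then remains to verify commutativity, which is a single diagram chase: for every $x \in X$ one computes $\bigl(\prod_i \iota_i\bigr)\bigl(\can_X(x)\bigr) = \bigl(\iota_i(\bar x)\bigr)_i = \bigl(\overline{\iota(x)}\bigr)_i = \can_Y\bigl(\iota(x)\bigr)$, where the middle equality is just the definition of $\iota_i$. I expect no genuine obstacle here: the content of the lemma is essentially formal once the semi-norm–preserving property of $\iota$ is in hand, and the only place demanding a little attention is confirming that the canonical maps into the product respect the topologies, which is controlled entirely by the chosen sequences of semi-norms.
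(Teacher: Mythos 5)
Your proof is correct and follows essentially the same route as the paper's: well-definedness and the isometric property of $\iota_i$ both come from the single computation $\norm{\iota(x)}_i = \norm{x}_i$, and commutativity is the definition of $\iota_i$ unwound. The extra care you take with the canonical maps $X \to \prod_i X/\ker\norm{\cdot}_i$ (injectivity from separatedness, the topological-embedding check via basic neighbourhoods) goes beyond what the paper writes out but is sound and harmless.
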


\begin{proof}
  As $\iota$ is an isometric embedding, we have $\|\iota(x)\|_i = \|x\|_i$. Hence
  $x\in\ker\|\cdot\|_i$ iff $\iota(x)\in\ker\|\cdot\|_i$, i.e. $\iota_i$ is well-defined. By
  definition we have
  \begin{displaymath}
    \|\iota_i(\bar{x})\|_i = \left\|\overline{\iota(x)}\right\|_i = \|\iota(x)\|_i = \|x\|_i
    = \|\bar{x}\|_i
  \end{displaymath}
  and hence $\iota_i$ is an isometric embedding. The commutativity of the diagram directly follows
  from the definition of $\iota_i$.
\end{proof}

In addition, we also need the following technical
\begin{lemma}\label{lem:PushOut}
  Let $X\subseteq Y$ and $A$ be finite-dimensional Banach spaces, $Z$ a Banach space, $e\colon X\to A$
  an isometric embedding, $T\colon Y\to Z$ a linear operator with $\|T\|\leq r$, $r>1$, and $\pi\colon A\to Z$
  a non-expansive operator such that the diagram
  \[
  \begin{mycd}
    A \arrow{r}{\pi}  & Z \\
    X \arrow[hookrightarrow]{r} \arrow[hookrightarrow]{u}{e} & Y \arrow[swap]{u}{T}
  \end{mycd}
  \]
  commutes.
  There exists a finite-dimensional Banach space $C$, an isometric embedding $i_A\colon A\to C$,
  an $(r-1)$-isometric embedding $i_Y\colon Y \to C$ and a non-expansive operator $\pi'\colon C\to Z$
  such that the diagram
  $$
  \begin{mycd}
     \phantom{a}& & Z \\
     A \arrow{rru}{\pi} \arrow[swap, hookrightarrow]{r}{i_A} & C \arrow[swap]{ru}{\pi'} &  \\
     X \arrow[hookrightarrow]{rr} \arrow[hookrightarrow]{u}{e} & & Y \arrow[swap]{uu}{T} \arrow[rightsquigarrow]{lu}{i_Y}
   \end{mycd}
   $$
   is commutative.
\end{lemma}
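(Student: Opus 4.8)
The plan is to realize $C$ as a pushout of the span $A \xleftarrow{e} X \hookrightarrow Y$ in the category of Banach spaces, but carrying a \emph{weighted} norm tailored so that the induced operator into $Z$ stays non-expansive. Concretely, I would set $C = (A \oplus Y)/V$, where $V = \setof{(e(x), -x)}{x \in X}$ (writing $x$ also for its image under the inclusion $X \hookrightarrow Y$). Since $e$ is injective and $A, X, Y$ are finite-dimensional, $V$ is a finite-dimensional, hence closed, subspace, so $C$ becomes a finite-dimensional Banach space once we norm $A \oplus Y$ and pass to the quotient norm. I would define $i_A(a) = [(a,0)]$ and $i_Y(y) = [(0,y)]$; the relation defining $V$ says precisely that $i_A(e(x)) = i_Y(x)$ for $x \in X$, so the square through $X$ commutes. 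The operator out of the pushout is $\pi'([(a,y)]) = \pi(a) + T(y)$, which is well defined exactly because the original square commutes: on a generator $(e(x), -x)$ of $V$ it returns $\pi(e(x)) - T(x) = 0$. By construction $\pi' \circ i_A = \pi$ and $\pi' \circ i_Y = T$, so both triangles commute.

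The only genuine choice is the norm on $A \oplus Y$, and the correct one is the weighted $\ell^1$-norm $\norm{(a,y)} = \norm{a}_A + r\norm{y}_Y$, with the weight $r$ forced by the hypothesis $\norm{T} \le r$. I would then check the three metric properties directly from the quotient-norm formula. For $i_A$, the norm of $[(a,0)]$ is $\inf_{x \in X}(\norm{a + e(x)}_A + r\norm{x})$; the triangle inequality together with $e$ being isometric gives $\norm{a+e(x)}_A + r\norm{x} \ge \norm{a}_A + (r-1)\norm{x} \ge \norm{a}_A$, with equality at $x = 0$, so $i_A$ is isometric. For $i_Y$, the norm of $[(0,y)]$ is $\inf_{x \in X}(\norm{x} + r\norm{y - x})$, which is at most $r\norm{y}$ (take $x=0$) and at least $\norm{y}$ (triangle inequality); hence $r^{-1}\norm{y} \le \norm{i_Y(y)} \le r\norm{y}$, making $i_Y$ an $(r-1)$-isometric embedding.

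The crux is the non-expansiveness of $\pi'$, and this is where the weight $r$ pays off. Using the commutativity relation $\pi(e(x)) = T(x)$ on $X$, I would rewrite, for every $x \in X$,
\[
\pi(a) + T(y) = \pi(a + e(x)) + T(y - x),
\]
and then estimate
\[
\norm{\pi(a) + T(y)}_Z \le \norm{\pi(a+e(x))}_Z + \norm{T(y-x)}_Z \le \norm{a + e(x)}_A + r\norm{y - x}_Y,
\]
where the last step uses that $\pi$ is non-expansive and $\norm{T} \le r$. Taking the infimum over $x \in X$ yields $\norm{\pi'([(a,y)])}_Z \le \norm{[(a,y)]}_C$, so $\pi'$ is non-expansive. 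I expect this last estimate to be the heart of the matter: it is precisely the identity $\pi \circ e = T$ on $X$ that permits shifting mass between the two summands, while the weight $r$ on the $Y$-component is exactly what is needed to absorb the factor $\norm{T} \le r$ --- which is also the reason $i_Y$ can only be guaranteed to be $(r-1)$-isometric rather than isometric.
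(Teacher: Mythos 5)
Your proposal is correct and follows essentially the same route as the paper: the same pushout $C=(A\oplus Y)/\Delta$ with the weighted $\ell^1$ quotient norm (the paper writes the quotient-norm formula $\inf_{x\in X}\{\norm{a-e(x)}_A+r\norm{x+y}_Y\}$ directly, which matches yours up to the sign of $x$), the same estimates for $i_A$ and $i_Y$, and the same shifting-by-$e(x)$ trick for the non-expansiveness of $\pi'$. Your explicit checks that $\Delta$ is closed and that $\pi'$ vanishes on the generators of $\Delta$ are slightly more careful than the paper's presentation, but mathematically identical.
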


\begin{proof}
  We define $C:= (A\oplus Y)/\Delta$ with $\Delta=\{(e(x),-x)\colon x\in X\}$ equipped with the
  norm
  \[
  \|(a,y)\| = \inf_{x\in X} \{\|a-e(x)\|_A+r\|x+y\|_Y\}.
  \]
  First we show that $i_A$ is an isometry. For $a\in A$ we obtain
  \begin{align*}
    \|i_A(a)\| &= \inf_{x\in X} \{\|a-e(x)\|_A+r\|x\|_Y\} \\
    &=  \inf_{x\in X} \{\|a-e(x)\|_A+\|e(x)\|_A+(r-1)\|x\|_Y\} \\
    & \geq \inf_{x\in X} \{\|a\|_A +(r-1)\|x\|_Y\} = \|a\|_A
  \end{align*}
  and, by setting $x=0$, $\|i_A(a)\| \leq \|a\|_A$. For $y\in Y$ we get
  \[
  \|i_Y(y)\| = \|(0,y)\| = \inf_{x\in X} \{\|-e(x)\|_A+r\|x+y\|_Y\} \leq r\|y\|_Y
  \]
  by setting $x=0$, and
  \begin{align*}
    \|i_Y(y)\| &= \inf_{x\in X} \{\|-e(x)\|_A+r\|x+y\|_Y\} \\
    &= \inf_{x \in X} \{\|-x\|_Y+\|y+x\|_Y+(r-1)\|x+y\|_Y\}\\
    & \geq \inf_{x\in X} \{\|y\|_Y+(r-1)\|x+y\|_Y\} \geq  \|y\|_Y \geq \frac{1}{r}\|y\|_Y,
  \end{align*}
  again using the triangle inequality.
  The linear operator $\pi'\colon A\to Z$ can be defined as
  \begin{align*}
    \pi'((a,y)) &= \pi(a-e(x))+T(x+y) = \pi(a) -\pi(e(x))+T(x)+T(y)\\ 
                & = \pi(a)+T(y).
  \end{align*}
  It satisfies
  \begin{align*}
  \|\pi'((a,y))\|_Z &= \|\pi'((a-e(x),x+y))\|_Z \\
                    & \leq \|\pi(a-e(x))\|_Z+\|T(x+y)\|_Z\\
                    & \leq \|a-e(x)\|_A+r\|x+y\|_Y
  \end{align*}
  for all $x\in X$ and hence $\|\pi'((a,y))\|\leq \|(a,y)\|$.
\end{proof}

\begin{theorem}
  The space $\mathbb{G}^{\mathbb{N}}$ equipped with the graded sequence $$\{\norm{\cdot}_n\}_{n\in\mathbb{N}}$$
  of semi-norms defined above is of almost universal disposition for finite-dimensional graded Fr\'{e}chet
  spaces.
\end{theorem}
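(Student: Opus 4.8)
The plan is to realise $\mathbb G^{\mathbb N}$, equipped with the graded seminorms constructed above, as the inverse limit of the tower
\[
\mathbb G \xleftarrow{\ \pi\ } \mathbb G \xleftarrow{\ \pi\ } \mathbb G \xleftarrow{\ \pi\ } \cdots,
\]
and to build the required extension one level of the tower at a time. Concretely, for each $i$ the quotient $\mathbb G^{\mathbb N}/\ker\norm\cdot_i$ is $(\mathbb G^i,\norm\cdot_i)$, which is isometric to $\mathbb G$, and the bonding map onto level $i-1$ is precisely the universal operator $\pi_i=\pi$. Hence a continuous linear map into $\mathbb G^{\mathbb N}$ is the same thing as a family of maps into $\mathbb G$, one at each level, that intertwine with $\pi$. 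Given finite-dimensional graded $X\subseteq Y$ and an isometric embedding $\map{f_0}{X}{\mathbb G^{\mathbb N}}$, I first pass to the quotient systems $X_i=X/\ker\norm\cdot_i$ and $Y_i=Y/\ker\norm\cdot_i$, which are finite-dimensional normed spaces with non-expansive bonding maps $p_i^X,p_i^Y$ coming from the grading. By Lemma~\ref{lem:embed} the inclusion and $f_0$ induce isometric embeddings $\map{\iota_i}{X_i}{Y_i}$ and $\map{e_i}{X_i}{\mathbb G}$ that are compatible with the bonding maps and with $\pi$, i.e. $\pi\circ e_i=e_{i-1}\circ p_i^X$. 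The task then reduces to producing $\eps$-isometric embeddings $\map{f_i}{Y_i}{\mathbb G}$ with
\[
f_i\circ\iota_i=e_i \qquad\text{and}\qquad \pi\circ f_i=f_{i-1}\circ p_i^Y,
\]
since such a compatible family assembles into the desired $f$.

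I would construct the $f_i$ by induction on $i$, fixing in advance a summable budget $\eps_i'>0$ with $\prod_i(1+\eps_i')\le 1+\eps$. The base case $i=1$ is nothing but the almost universal disposition~(G) of the Gurari\u{\i} space: extend the isometric $e_1$ to an $\eps_1'$-isometric $\map{f_1}{Y_1}{\mathbb G}$ with $f_1\restriction X_1=e_1$, there being no bonding constraint below level $1$. For the inductive step set $T:=f_{i-1}\circ p_i^Y\colon Y_i\to\mathbb G$. Because the grading gives $\norm\cdot_{i-1}\le\norm\cdot_i$, the operator $T$ has norm at most $r:=1+\eps_{i-1}$, and the compatibility of the $e_i$ shows that the square $\pi\circ e_i=T\restriction X_i$ commutes. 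As $T$ is merely bounded by $r>1$ rather than non-expansive, I first apply the pushout Lemma~\ref{lem:PushOut} with $A:=e_i(X_i)$, $Z:=\mathbb G$ and $\pi\restriction A$ in the role of its non-expansive arrow; this yields a finite-dimensional $C$, an isometric $i_A$, an $\eps_{i-1}$-isometric $i_Y$ (as $r-1=\eps_{i-1}$), and a non-expansive $\map{\pi'}{C}{\mathbb G}$ with $\pi'\circ i_A=\pi\restriction A$ and $\pi'\circ i_Y=T$. Now $\pi'$ is genuinely non-expansive, so I feed the data $A\subseteq C$, the operator $\pi'$, and the inclusion $A\hookrightarrow\mathbb G$ into the exact lifting property~$(2')$ of Proposition~\ref{prop:ProjPi} to obtain an $\eps_i'$-isometric $\map{g}{C}{\mathbb G}$ extending that inclusion and satisfying $\pi\circ g=\pi'$. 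Setting $f_i:=g\circ i_Y$, a short diagram chase gives $f_i\circ\iota_i=e_i$ and $\pi\circ f_i=\pi'\circ i_Y=T$, while the distortion multiplies as $1+\eps_i=(1+\eps_{i-1})(1+\eps_i')$, so that $\sup_i\eps_i\le\prod_i(1+\eps_i')-1\le\eps$ by the choice of budget.

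Finally I would assemble the family: the intertwining relations $\pi\circ f_i=f_{i-1}\circ p_i^Y$ make $(f_i)_i$ a morphism of inverse systems, hence define a continuous linear $\map{f}{Y}{\mathbb G^{\mathbb N}}$. It extends $f_0$, because $f_i\circ\iota_i=e_i$ at every level forces $f\circ\iota=f_0$; and it is $\eps$-isometric, because $\norm{f(y)}_i=\norm{f_i(\bar y)}_{\mathbb G}$ lies within the factor $(1+\eps)^{\pm1}$ of $\norm y_i$ for every $i$, whence $f$ is an embedding in the sense of Definition~\ref{def:epsIsom}.

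I expect the main obstacle to be the inductive step, and specifically the demand for the \emph{exact} commutation $\pi\circ f_i=f_{i-1}\circ p_i^Y$: only exact intertwining lands the family in the inverse limit $\mathbb G^{\mathbb N}$ rather than in some approximate object, yet the natural candidate $T=f_{i-1}\circ p_i^Y$ has norm strictly above $1$. Reconciling these two facts is exactly what forces the two-step passage ``enlarge by the pushout of Lemma~\ref{lem:PushOut}, then lift exactly by~$(2')$'' and constitutes the technical heart of the argument; the remaining points, namely the inverse-limit identification and the multiplicative distortion bookkeeping, are routine by comparison.
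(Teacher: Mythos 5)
Your proposal is correct and follows essentially the same route as the paper's own proof: the same reduction to the quotient levels $X_i\subseteq Y_i$ via Lemma~\ref{lem:embed}, the same base case from property (G) of $\mathbb{G}$, the same two-step inductive move (enlarge by the pushout of Lemma~\ref{lem:PushOut} applied to $T=f_{i-1}\circ p_i^Y$ with $\|T\|\le r=1+\varepsilon_{i-1}$, then lift exactly via property $(2')$ of Proposition~\ref{prop:ProjPi}), and the same multiplicative error budget $\prod_i(1+\varepsilon_i')\le 1+\varepsilon$. Your inverse-limit framing is merely a more explicit articulation of the paper's assembly of the compatible maps $g_n=\tilde g_n\circ i_{Y_n}$ into the final embedding $g\colon Y\to\mathbb{G}^{\mathbb{N}}$.
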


\begin{proof}
  Given an isometric embedding $f\colon X\to\mathbb{G}^{\mathbb{N}}$ and $\varepsilon > 0$, we choose a 
  sequence $(\varepsilon_i)_{i\in\mathbb{N}}$ satisfying $\prod_{i=1}^{\infty}(1+\varepsilon_i) < 1 + \varepsilon$.
  We will use the notation $X_i := X/\ker \|\cdot\|_i$, $Y_i := Y/\ker\|\cdot\|_i$.
  By $f_i\colon X_i\to \mathbb{G}^{i}$ we denote the mapping induced by the isometric embedding 
  $f\colon X\to\mathbb{G}^{\mathbb{N}}$.

  As a first step, we can use that $X_1\subseteq Y_1$ are finite-dimensional Banach spaces and
  $f_1\colon X_1\to\mathbb{G}$ is an isometric embedding to obtain an $\varepsilon_1$-isometric
  embedding $g_1\colon Y_1\to\mathbb{G}$ which extends $f_1$.

  Now assume that we already have constructed a $\delta$-isometric embedding
  \[
  g_{n-1}\colon Y_{n-1} \to \mathbb{G}^{n-1},
  \]
  where $1+\delta = \prod_{i=1}^{n-1}(1+\varepsilon_i)$, extending $f_{n-1}\colon X_{n-1}\to\mathbb{G}^{n-1}$.

  We consider the diagram
  \begin{center}
    \begin{mycd}[column sep=2cm]
      f_n(X_n) \arrow[two heads]{r}{\pi\restriction {f_n(X_n)}}& \mathbb{G}^{n-1} \\
      X_n \arrow[hookrightarrow]{r} \arrow[hookrightarrow]{u}{f_n} & Y_n \arrow[swap]{u}{g_{n-1}\circ p_{n-1}^{n}}
    \end{mycd},
  \end{center}
  where $p_{n-1}^{n}\colon Y_n\to Y_{n-1}$ denotes the canonical projection. Note that it is commutative
  since $\pi\circ f_{n} =f_{n-1}$. From Lemma~\ref{lem:PushOut} we deduce the existence of a finite-dimensional Banach space $C$ such that the diagram
  \begin{center}
  \begin{mycd}
     \mathbb{G}^{n} \arrow{rr}{\pi}& & \mathbb{G}^{n-1} \\
     f_n(X_n) \arrow{rru}{\pi\restriction {f_n(X_n)}} \arrow[swap, hookrightarrow]{r}{i_{f_n(X_n)}}
     \arrow[hookrightarrow]{u}& C \arrow[swap]{ru}{\pi'} &  \\
     X_n \arrow[hookrightarrow]{rr} \arrow[hookrightarrow]{u}{f_n} & & Y_n \arrow[swap]{uu}{g_{n-1}\circ\pi_{n-1}^{n}}
     \arrow[rightsquigarrow]{lu}{i_{Y_n}}
   \end{mycd}
   \end{center}
   commutes and $\pi'$ is non-expansive. From Proposition~\ref{prop:ProjPi}, we may now conclude that
   there is an $\varepsilon_n$-isometric emebedding $\tilde{g}_n\colon C\to \mathbb{G}^{n}$ which extends
   both $f_n$ and $\pi'$. Hence $g_n = \tilde{g}_n\circ \iota_{Y_n}$ is an $\delta'$-isometric embedding,
   where $1+\delta'= \prod_{i=1}^{n}(1+\varepsilon_i)$, which extends both $f_n$ and $g_{n-1}$.

   Therefore by induction we get an $\varepsilon$-isometric embedding $g\colon Y\to \mathbb{G}^{\mathbb{N}}$ extending
   the embedding $f\colon X\to\mathbb{G}^{\mathbb{N}}$.
\end{proof}

\section{Uniqueness and Universality}

The aim of this section is to prove universality and the following uniqueness result for the space 
$\mathbb{G}^{\mathbb{N}}$. 

\begin{prop}\label{prop:ApprIsomUnordered}
  Let $E$ and $F$ be graded Fr\'{e}chet spaces which are of almost universal disposition for finite-dimensional
  graded Fr\'{e}chet spaces, $\varepsilon>0$, $X\subseteq E$ a finite-dimensional subspace and $f\colon X\to F$
  an $\varepsilon$-isometric embedding. Then there exists a bijective isometry $h\colon E\to F$ satisfying
  $\|h(x)-f(x)\|_{i}<4\varepsilon\|x\|_i$ for all $i\in\mathbb{N}$.
\end{prop}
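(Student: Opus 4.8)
The plan is to construct the isometry $h$ by a standard back-and-forth argument, building it as the limit of a sequence of finite-dimensional $\varepsilon$-isometric embeddings that alternately extend on the $E$-side and the $F$-side, using the almost universal disposition property of both $E$ and $F$ at each stage. First I would fix an increasing sequence of finite-dimensional subspaces $X = X_0 \subseteq X_1 \subseteq X_2 \subseteq \cdots$ of $E$ whose union is dense in $E$, and similarly a cofinal chain in $F$; since $E$ and $F$ are separable graded Fréchet spaces this is possible. The goal is to produce maps $f_n$ defined on larger and larger subspaces, each an $\varepsilon_n$-isometric embedding for a carefully chosen summable sequence of errors, so that the composition of correction terms stays controlled.

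The key mechanism is the following. Suppose at an even step I have an $\varepsilon_n$-isometric embedding $f_n \colon A_n \to F$ with $A_n \subseteq E$ finite-dimensional. I want to extend its domain to include the next $X_{k}$, i.e.\ to a finite-dimensional $A_{n+1}$ with $A_n \subseteq A_{n+1} \subseteq E$. Here the obstacle is that $f_n$ is only an approximate (not exact) isometry, so I cannot directly feed the inclusion $A_n \subseteq A_{n+1}$ into the almost-universal-disposition property of $F$, which requires an \emph{isometric} embedding on the subspace. The standard remedy is to renorm $A_n$ by transporting the norm of its image $f_n(A_n) \subseteq F$ back along $f_n$; this makes $f_n$ isometric for the renormed space, and the distortion between the old and new norms on $A_n$ is bounded by $\varepsilon_n$. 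Applying the almost universal disposition of $F$ to this renormed inclusion yields an $\varepsilon_{n+1}$-isometric extension, and composing the renorming distortion with $\varepsilon_{n+1}$ gives overall control. At odd steps I do the symmetric construction, extending on the $F$-side using that $E$ is also of almost universal disposition, building partial inverses.

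After running back-and-forth, I obtain a chain of maps whose domains exhaust a dense subspace of $E$ and whose (approximate) inverses exhaust a dense subspace of $F$. The plan is to show the sequence of maps converges pointwise to a map $h$ defined on the dense union; because the accumulated multiplicative errors satisfy $\prod(1+\varepsilon_n) \to 1$ by the summability choice, the limit $h$ is an \emph{exact} isometry on each seminorm $\|\cdot\|_i$, hence extends by continuity and completeness to a well-defined isometric embedding $h\colon E \to F$. The back-and-forth guarantees $h$ is surjective, so it is a bijective isometry. Finally, to get the quantitative estimate $\|h(x)-f(x)\|_i < 4\varepsilon\|x\|_i$ on the original subspace $X$, I would track that the very first step of the construction extends (a renorming of) the given $f$, so $h$ and $f$ agree up to the first correction, and the telescoping of all later corrections on $X$ is dominated by the accumulated error; the constant $4$ comes from bounding the geometric series of distortions by a convenient multiple of $\varepsilon$.

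The main obstacle I expect is the bookkeeping that reconciles the \emph{approximate} isometries produced at each step with the \emph{exact}-isometry hypothesis demanded by the definition of almost universal disposition: the renorming trick must be applied cleanly, the accumulated multiplicative constants must be shown to telescope to $1$ simultaneously for every seminorm $\|\cdot\|_i$ (not just finitely many), and the final estimate on $X$ must be extracted from the first-step agreement together with a uniform bound on all subsequent perturbations. Controlling these errors uniformly in $i$ is what makes the Fréchet case genuinely more delicate than the Banach analogue.
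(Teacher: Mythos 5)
There is a genuine gap at the decisive point of your argument: the renorming-plus-exact-extension scheme compounds distortions multiplicatively and therefore cannot produce an exact isometry in the limit. If at stage $n$ your map $f_n$ is, with respect to the \emph{original} seminorms, a $\delta_n$-isometric embedding, then transporting the seminorms of $F$ back along $f_n$ distorts the seminorms of $A_n$ by the factor $1+\delta_n$, and the extension $f_{n+1}$ produced by almost universal disposition, being $\varepsilon_{n+1}$-isometric only for the \emph{renormed} seminorms, is merely a $\delta_{n+1}$-isometry for the original ones, with $1+\delta_{n+1}=(1+\delta_n)(1+\varepsilon_{n+1})$. Hence $1+\delta_n=\prod_{k\le n}(1+\varepsilon_k)$, which is increasing and bounded below by $1+\varepsilon_0=1+\varepsilon$. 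Your claim that the limit is an exact isometry ``because the accumulated multiplicative errors satisfy $\prod(1+\varepsilon_n)\to 1$'' is false: an infinite product $\prod_n(1+\varepsilon_n)$ with all $\varepsilon_n>0$ converges to a limit strictly greater than $1$ (here at least $1+\varepsilon$, since the first factor comes from the given map $f$ and is not at your disposal). So your limit $h$ is only a $\delta_\infty$-isometry with $\delta_\infty\ge\varepsilon$, and no choice of the error sequence can repair this within your scheme, because exact extension forces every later map to inherit the full distortion already accumulated.

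The idea you are missing---and the one the paper uses---is to decouple the isometric quality of each new map from the accumulated error, at the price of giving up exact extension. The correction lemma (Lemma~\ref{lem:corr}, motivated by \cite[Lemma~2.2]{Gar13:Universal}) embeds both the domain $X$ and the codomain $Y$ of an $\varepsilon$-isometric embedding $f$ \emph{isometrically} into an amalgam $Z=X\oplus Y$, equipped with seminorms chosen so that $\|j\circ f-\iota\|_i\le\varepsilon$; then Lemma~\ref{lem:bafarg} applies almost universal disposition to the exact isometric embedding $\iota$ and produces, for \emph{every} $\delta>0$, a $\delta$-isometric embedding $g\colon Y\to E$ with $\|g\circ f-\operatorname{id}_X\|_i<2\varepsilon$. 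The crucial feature is that $\delta$ is a fresh parameter independent of $\varepsilon$: the new map does not extend the old one, but is $2\varepsilon$-close to inverting it on the old domain, while its own distortion can be taken as small as desired. Running the back-and-forth with these maps, each $f_m$ is an honest $\varepsilon_m$-isometry for the original seminorms with $\varepsilon_m\to0$; the pointwise limits exist because the closeness errors $2(\varepsilon_n+2\varepsilon_n\varepsilon_{n+1}+\varepsilon_{n+1})$ are summable; the limit is an $\varepsilon_n$-isometry for every $n$, hence an isometry; and the telescoped estimate yields the $4\varepsilon$ bound. Your overall architecture (back-and-forth along dense chains of finite-dimensional subspaces, summable errors, tracking the first step to estimate $\|h(x)-f(x)\|_i$) matches the paper, but without the amalgam correction the construction as written stalls at an approximate isomorphism rather than a bijective isometry.
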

First we need to show some additional lemmata used for the proof.
Let $(X,\{\|\cdot\|_{i}\}_{i\in\mathbb{N}})$ and $(Y,\{\|\cdot\|_{i}\}_{i\in\mathbb{N}})$ be Fr\'{e}chet spaces
with a fixed sequence of semi-norms and $f\colon X\to Y$ a linear mapping with the property
that for all $i\in\mathbb{N}$ there exists a constant $C_i>0$ such that $\|f(x)\|_i\leq C_i \|x\|_i$
holds for all $x\in X$. We can do this since we only want to consider mappings which are
isometries or at least $\varepsilon$-isometries.

We can now define
\[
\|f\|_i= \sup_{\|x\|_i=1} \|f(x)\|_i
\]
for all $i\in\mathbb{N}$.
  Note that the above  condition on $f$  is stronger than the continuity of $f$.

We need the following Lemma motivated by \cite[Lemma~2.2]{Gar13:Universal} for  Banach spaces.

\begin{lemma}\label{lem:corr}
  Let $(X,\{\|\cdot\|_{X,i}\}_{i\in \mathbb{N}})$ and $(Y,\{\|\cdot\|_{Y,i}\}_{i\in \mathbb{N}})$ be
  finite-dimensional graded Fr\'{e}chet spaces and let $\varepsilon>0$ and $f\colon X\to Y$ be
  an $\varepsilon$-isometric embedding. There exists a finite-dimensional graded Fr\'{e}chet space
  $Z$ and isometric embeddings $\iota\colon X\to Z$ and $j\colon Y\to Z$ such that
  $\|j\circ f-\iota\|_{i} \leq \varepsilon$ holds for all $i\in\mathbb{N}$.
\end{lemma}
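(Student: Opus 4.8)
The plan is to build $Z$ as an amalgam of $X$ and $Y$ over the approximate identification supplied by $f$, in direct analogy with the pushout construction of Lemma~\ref{lem:PushOut}. Concretely, I would take $Z := X\oplus Y$ as a vector space and, for each $i\in\mathbb{N}$, equip it with the infimal-convolution seminorm
\[
\|(x,y)\|_{Z,i} := \inf_{u\in X}\bigl(\|x-u\|_{X,i} + \|y+f(u)\|_{Y,i} + \varepsilon\|u\|_{X,i}\bigr),
\]
and set $\iota(x):=(x,0)$ and $j(y):=(0,y)$. Positive homogeneity and the triangle inequality for each $\|\cdot\|_{Z,i}$ follow from the usual splitting $u=u_1+u_2$ argument, and since each summand is increasing in $i$ (the gradings on $X$ and $Y$ are increasing and $f$ is fixed), this pointwise monotonicity passes to the infimum, so $\{\|\cdot\|_{Z,i}\}_{i\in\mathbb{N}}$ is again an increasing sequence of seminorms.

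Next I would check that $\iota$ and $j$ are isometric for every $i$. For $\iota$, taking $u=0$ gives $\|(x,0)\|_{Z,i}\le\|x\|_{X,i}$, while the lower estimate $\|f(u)\|_{Y,i}\ge(1+\varepsilon)^{-1}\|u\|_{X,i}\ge(1-\varepsilon)\|u\|_{X,i}$ from Definition~\ref{def:epsIsom} yields $\|x-u\|_{X,i}+\|f(u)\|_{Y,i}+\varepsilon\|u\|_{X,i}\ge\|x-u\|_{X,i}+\|u\|_{X,i}\ge\|x\|_{X,i}$. For $j$, taking $u=0$ supplies the upper bound, and the companion estimate $\|f(u)\|_{Y,i}\le(1+\varepsilon)\|u\|_{X,i}$ gives $(1+\varepsilon)\|u\|_{X,i}+\|y+f(u)\|_{Y,i}\ge\|f(u)\|_{Y,i}+\|y+f(u)\|_{Y,i}\ge\|y\|_{Y,i}$. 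Finally, plugging the admissible choice $u=-x$ into the defining infimum for $(j\circ f-\iota)(x)=(-x,f(x))$ collapses the first two terms to zero by linearity of $f$ and leaves $\varepsilon\|x\|_{X,i}$, so $\|j\circ f-\iota\|_i\le\varepsilon$ for all $i$, as required.

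The one genuinely non-routine point is to confirm that $Z$ really is a \emph{finite-dimensional graded Fr\'echet space}, i.e. that the seminorms separate points in the limit. Here I would exploit finite-dimensionality: since $X$ and $Y$ are Hausdorff with increasing gradings, there is an index $i_0$ beyond which both $\|\cdot\|_{X,i}$ and $\|\cdot\|_{Y,i}$ are genuine norms. For such $i$ the function $u\mapsto\|x-u\|_{X,i}+\|y+f(u)\|_{Y,i}+\varepsilon\|u\|_{X,i}$ is continuous and coercive, so its infimum is attained; if it vanishes, then all three nonnegative terms vanish, forcing $u=0$, $x=0$, and $y=0$. Hence $\|\cdot\|_{Z,i}$ is a norm for every $i\ge i_0$, so $\bigcap_{i}\ker\|\cdot\|_{Z,i}=\{0\}$ and $Z$, being finite-dimensional and therefore complete, is a finite-dimensional graded Fr\'echet space. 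The linear maps $\iota$ and $j$, being isometric at each level, are injective, and between finite-dimensional spaces they are automatically topological embeddings, completing the argument.
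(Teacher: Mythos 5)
Your proof is correct and takes essentially the same approach as the paper: your seminorm $\inf_{u\in X}\bigl(\|x-u\|_{X,i}+\|y+f(u)\|_{Y,i}+\varepsilon\|u\|_{X,i}\bigr)$ is exactly the paper's $\inf\{\|u\|_{X,i}+\|v\|_{Y,i}+\varepsilon\|w\|_{X,i}\colon x=u+w,\ y=v-f(w)\}$ after eliminating the constraints, and the three estimates (isometry of $\iota$ via $(1+\varepsilon)^{-1}\geq 1-\varepsilon$, isometry of $j$ via $\|f(u)\|_{Y,i}\leq(1+\varepsilon)\|u\|_{X,i}$ and the triangle inequality, and $\|j\circ f-\iota\|_i\leq\varepsilon$ by the choice $u=-x$) match the paper's computations step for step. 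Your additional verification that the seminorms are increasing and eventually norms, so that $Z$ is genuinely a finite-dimensional graded Fr\'echet space, is a correct elaboration of a point the paper leaves implicit.
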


\begin{proof}
  We set $Z = X \oplus Y$ equipped with the semi-norms
  \begin{multline*}
    \|(x,y)\|_{i}=\inf\{\|u\|_{X,i}+\|v\|_{Y,i}+\varepsilon\|w\|_{X,i}\colon x=u+w,\\
    y=v-f(w),u,w\in X, v\in Y\}.
  \end{multline*}
  We have
  \begin{displaymath}
    \|j(f(x))-\iota(x)\|_{i}=\|(x,-f(x))\|_{i}\leq \varepsilon \|x\|_{X,i}
  \end{displaymath}
  by taking $u=0$, $v=0$ and $w=x$. Hence $\|j\circ f-\iota\|_{i}\leq\varepsilon$ for all
  $i\in\mathbb{N}$. From
  \begin{displaymath}
    \frac{1}{1+\varepsilon}\geq 1-\varepsilon \Leftrightarrow 1 \geq 1-\varepsilon^2
  \end{displaymath}
  we may deduce $\|f(x)\|_{Y,i}\geq (1+\varepsilon)^{-1}\|x\|_{X,i}\geq (1-\varepsilon)\|x\|_{X,i}$ which
  we will need in the following.
  Now we show that $\iota$ is an isometric embedding. For $x\in X$ we have
  \begin{displaymath}
    \|\iota(x)\|_i = \|(x,0)\|_{i} \leq \|x\|_{X,i}.
  \end{displaymath}
  Setting $x=u+w$ and $0=v-f(w)$, we obtain
  \begin{align*}
    \|u\|_{X,i}+\|v\|_{Y,i}+\varepsilon\|w\|_{X,i} & = \|u\|_{X,i}+\|f(w)\|_{Y,i}+\varepsilon \|w\|_{X,i}\\
    &\geq \|u\|_{X,i}+(1-\varepsilon)\|w\|_{X,i}+\varepsilon\|w\|_{X,i}\\
    &= \|u\|_{X,i}+\|w\|_{X,i}\geq\|u+w\|_{X,i}=\|x\|_{X,i}
  \end{align*}
  and hence $\|\iota(x)\|_{i}=\|(x,0)\|_{i}\geq \|x\|_{X,i}$. Therefore $\iota$ is an isometric embedding.
  Analogously to above, we have $\|j(y)\|_i=\|(0,y)\|_i\leq \|y\|_{Y,i}$. Setting $0=u+w$ and $y=v-f(w)$,
  we obtain
  \begin{align*}
    \|u\|_{X,i}+\|v\|_{Y,i}+\varepsilon \|w\|_{X,i} &=\|w\|_{X,i}+\|v\|_{Y,i}+\varepsilon\|w\|_{X,i}\\
                                                    &=(1+\varepsilon)\|w\|_{X,i}+\|v\|_{Y,i}\\ 
                                                    & \geq \|f(w)\|_{Y,i}+\|v\|_{Y,i}\\
                                                    &\geq \|f(w)-v\|_{Y,i} = \|y\|_{Y,i}
  \end{align*}
  and hence $\|j(y)\|_i=\|(0,y)\|_i\geq \|y\|_{Y,i}$, i.e. $j$ is also an isometric embedding.
\end{proof}
We need also the next
\begin{lemma}\label{lem:bafarg}
  Let $E$ be a graded Fr\'{e}chet space which is of almost universal disposition for finite-dimensional 
  graded Fr\'{e}chet spaces, $X\subseteq E$ a finite-dimensional subspace and $\varepsilon>0$. Given an 
  $\varepsilon$-isometry $f\colon X\to Y$, for all $\delta>0$ there exists a $\delta$-isometry $g\colon 
  Y\to E$ with the property $\|g\circ f - \operatorname{id}_X\|_{X,i}<2\varepsilon$ for all $i\in\mathbb{N}$.
\end{lemma}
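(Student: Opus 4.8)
The plan is to amalgamate $X$ and $Y$ over the $\varepsilon$-isometry $f$ by means of Lemma~\ref{lem:corr}, and then to ``unfold'' the resulting amalgam inside $E$ using the almost universal disposition of $E$. Here $Y$ is understood to be a finite-dimensional graded Fr\'echet space, so that Lemma~\ref{lem:corr} applies to $f$. Before starting, observe that it suffices to treat the case $\delta<1$: any $\delta'$-isometry with $\delta'\leq\delta$ is in particular a $\delta$-isometry, since the defining inequalities only weaken as the parameter grows. Hence I may replace $\delta$ by $\min\{\delta,\tfrac12\}$ and assume $\delta<1$ from now on, the point being that this is what converts the factor $1+\delta$ appearing below into the desired bound $2\varepsilon$.

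First I would apply Lemma~\ref{lem:corr} to the $\varepsilon$-isometric embedding $\map f X Y$. This produces a finite-dimensional graded Fr\'echet space $Z$ together with isometric embeddings $\map\iota X Z$ and $\map j Y Z$ such that $\norm{j\circ f-\iota}_i\leq\varepsilon$ for all $i\in\mathbb{N}$. Identifying $X$ with the subspace $\iota(X)\subseteq Z$, the composition of $\iota^{-1}\colon\iota(X)\to X$ with the (isometric) inclusion $X\hookrightarrow E$ yields an isometric embedding $f_0\colon\iota(X)\to E$ characterized by $f_0\circ\iota=\operatorname{id}_X$, where $\operatorname{id}_X$ denotes the inclusion $X\hookrightarrow E$ (this is also the meaning of $\operatorname{id}_X$ in the statement, since $g\circ f$ and $\operatorname{id}_X$ must both be read as maps into $E$).

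Next I would invoke the almost universal disposition of $E$ for the pair of finite-dimensional graded Fr\'echet spaces $\iota(X)\subseteq Z$, the isometric embedding $f_0\colon\iota(X)\to E$, and the parameter $\delta$. This provides a $\delta$-isometric embedding $\map G Z E$ with $G\restriction\iota(X)=f_0$, equivalently $G\circ\iota=\operatorname{id}_X$. Setting $g:=G\circ j\colon Y\to E$, the map $g$ is the composition of the isometric embedding $j$ with the $\delta$-isometric embedding $G$, and is therefore itself a $\delta$-isometry, as required.

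It remains to estimate $g\circ f-\operatorname{id}_X$. Using $G\circ\iota=\operatorname{id}_X$ and the linearity of $G$, for every $x\in X$ we have
\[
g(f(x))-x = G(j(f(x)))-G(\iota(x)) = G\bigl((j\circ f-\iota)(x)\bigr).
\]
Since $G$ is a $\delta$-isometry and $\norm{j\circ f-\iota}_i\leq\varepsilon$, it follows that
\[
\norm{g(f(x))-x}_i \leq (1+\delta)\,\norm{(j\circ f-\iota)(x)}_i \leq (1+\delta)\,\varepsilon\,\norm{x}_i < 2\varepsilon\,\norm{x}_i
\]
for all $i\in\mathbb{N}$, which gives $\norm{g\circ f-\operatorname{id}_X}_{X,i}<2\varepsilon$. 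The only genuine point requiring care is the conceptual setup: building the amalgam $Z$ via Lemma~\ref{lem:corr} and correctly identifying $f_0$, so that extending $f_0$ over $Z$ by almost universal disposition and then pre-composing with $j$ produces an approximate inverse of $f$. Once this commuting diagram is in place, the norm estimate is routine, the single subtlety being the reduction to $\delta<1$ recorded at the outset.
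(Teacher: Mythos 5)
Your proof is correct and follows essentially the same route as the paper: amalgamate $X$ and $Y$ over $f$ via Lemma~\ref{lem:corr}, extend the inclusion of $\iota(X)$ into $E$ using almost universal disposition with a parameter forced below $1$ (the paper chooses $0<\delta'<\min\{\delta,1\}$, matching your reduction to $\delta<1$), and set $g$ to be the extension composed with $j$, with the identical estimate $\norm{g(f(x))-x}_i\leq(1+\delta)\varepsilon\norm{x}_i<2\varepsilon\norm{x}_i$. No gaps to report.
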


\begin{proof}
  Choose $0<\delta'<\min\{\delta, 1\}$. By Lemma~\ref{lem:corr} there exists a finite-dimensional
  Fr\'{e}chet space $Z$ and isometric embeddings $\iota\colon X\to Z$ and $j\colon Y\to Z$ such
  that $\|j\circ f-\iota\|_i\leq \varepsilon$ for all $i\in\mathbb{N}$. As $E$ is of almost
  universal disposition for finite-dimensional Fr\'{e}chet spaces, there exists a $\delta'$-isometric
  embedding $h\colon Z\to E$ with the property $h\circ \iota\restriction X=\operatorname{id}_X$. Setting
  $g=h\circ j$, we get that $g$ is a $\delta$-isometric embedding as it is the composition of a
  $\delta$-isometric embedding with an isometric embedding. Additionally for $x\in X$ we obtain
  \begin{align*}
    \|g(f(x))-x\|_{i}&=\|h(j(f(x)))-h(\iota(x))\|_{i}\leq (1+\delta')\|j(f(x))-\iota(x)\|_{i}\\
    & \leq \varepsilon(1+\delta')\|x\|_{i} < 2\varepsilon \|x\|_{i}
  \end{align*}
  for all $i\in\mathbb{N}$.
\end{proof}

Now we can use these results to show that $\mathbb{G}^{\mathbb{N}}$ is, up to isometry, uniquely
determined by the property of being of almost universal disposition for finite-dimensional Fr\'echet spaces.

\begin{proof}[Proof of Proposition~\ref{prop:ApprIsomUnordered}]
  Let $\{\varepsilon_n\}_{n\in\mathbb{N}}$ be a fixed sequence of positive real numbers satisfying a
  decay condition which will be specified later in the proof.

  We pick $\varepsilon_0=\varepsilon$ and set $X_0=X$, $Y_0=Y$ and $f_0=f$. By assumption the
  mapping $f_0\colon X_0\to Y_0$ is an $\varepsilon_0$-isometric embedding.

  Now assume that $X_i$, $Y_i$ and $f_i$ have already been constructed for $i\leq n$ and that
  also the mappings $g_i$ have been constructed for $i<n$. Using Lemma~\ref{lem:bafarg} we
  obtain an $\varepsilon_{n+1}$-isometric embedding $g_n\colon Y_n\to X_{n+1}$ satisfying
  \begin{equation}\label{eq:gfsmall}
    \|g_n(f_n(x))-x\|_{i}\leq 2\varepsilon_{n}\|x\|_i
  \end{equation}
  for all $i\in \mathbb{N}$. Here the
  space $X_{n+1}$ is defined as an appropriately enlarged $g_{n}[Y_n]$ such that $Y_{n-1}\subseteq
  Y_{n}$ and $\bigcup_{n\in\mathbb{N}}X_n$ is dense in $E$. Again by using Lemma~\ref{lem:bafarg}
  we get an $\varepsilon_{n+1}$-isometric embedding $f_{n+1}\colon X_{n+1}\to Y_{n+1}$ where
  $Y_{n+1}$ is chosen analogously to $X_{n+1}$. This mapping satisfies
  \begin{equation}\label{eq:fgsmall}
    \|f_{n+1}(g_{n}(y))-y\|_i \leq 2\varepsilon_{n+1}\|y\|_i
  \end{equation}
  for all $i\in\mathbb{N}$.

  Now for fixed $n$ and $x\in X_n$ we get
  \[
  \|f_{n+1}(g_n(f_n(x)))-f_n(x)\|_i\leq 2\varepsilon_{n+1} \|f_n(x)\|_i
  \leq 2\varepsilon_{n+1}(1+\varepsilon_n)\|x\|_i
  \]
  and
  \begin{align*}
  \|f_{n+1}(g_{n}(f_n(x)))-f_{n+1}(x)\|_i &\leq (1+\varepsilon_{n+1}) \|g_n(f_n(x))-x\|_i\\
                                          &\leq 2\varepsilon_n(1+\varepsilon_{n+1})\|x\|_i.
  \end{align*}
  Using the triangle inequality, we obtain
  \[
  \|f_{n+1}(x)-f_{n}(x)\|_i\leq (\varepsilon_n+2\varepsilon_n\varepsilon_{n+1}+\varepsilon_{n+1})2\|x\|_i
  \]
  from the inequalities above. Now we assume  that
  \[
  \varepsilon_0 + 2\varepsilon_0\varepsilon_1 + \varepsilon_1 +
  \sum_{n=1}^{\infty}(\varepsilon_n+2\varepsilon_{n}\varepsilon_{n+1}+\varepsilon_{n+1})
  < 2\varepsilon
  \]
  which implies that $\{f(x_n)\}_{n\in\mathbb{N}}$ is a Cauchy sequence in $F$.

  For $x\in\bigcup_{n\in\mathbb{N}}X_n$ we define $h(x) = \lim_{m\geq n} f_n(x)$ where $m$ is chosen
  such that $x\in X_m$. Then $h$ is an $\varepsilon_n$-isometry for all $n\in\mathbb{N}$ and
  hence an isometry which can be uniquely extended to an isometry on $E$, which will be denoted
  by $h$ as well. From the inequalities above we deduce
  \[
  \|f(x)-h(x)\|_{i} \leq 2\sum_{n=0}^{\infty}(\varepsilon_n+2\varepsilon_n\varepsilon_{n+1}+\varepsilon_{n+1})
  < 4\varepsilon.
  \]
  In order to show that $h$ is bijective, we repeat the above procedure to show that $\{g_n(y)\}_{n\geq m}$
  is a Cauchy sequence for all $y\in Y_m$. Again we may obtain an isometry $g_\infty\colon F\to E$.
  From the conditions~\eqref{eq:gfsmall} and~\eqref{eq:fgsmall} we may conclude $g_\infty\circ h =
  \operatorname{id}_E$ and $h\circ g_\infty = \operatorname{id}_F$.
\end{proof}

We conclude this section by showing that every graded Fr\'{e}chet space can be embedded isometrically 
into $\mathbb{G}^{\mathbb{N}}$.

\begin{theorem}
  The space $\mathbb{G}^{\mathbb{N}}$ is universal for separable Fr\'{e}chet spaces.
\end{theorem}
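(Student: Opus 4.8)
The plan is to realize an arbitrary separable Fr\'echet space as a reduced projective limit of separable Banach spaces and to lift its bonding maps, one level at a time, to isometric embeddings into the tower $(\mathbb{G}^n,\norm{\cdot}_n)$ by repeated use of the lifting property $(1)$ of the universal operator $\pi$ from Theorem~\ref{Thm:Ibuiwdsdoh}.

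First I would fix a separable Fr\'echet space $X$ and, after replacing its seminorms by their successive maxima, assume it is graded, so that $\norm{\cdot}_1\leq\norm{\cdot}_2\leq\cdots$. Writing $\hat X_i$ for the completion of $X_i=X/\ker\norm{\cdot}_i$, each $\hat X_i$ is a separable Banach space; since the seminorms increase, the canonical maps $q_i\colon\hat X_i\to\hat X_{i-1}$ are non-expansive, and the standard reduced projective limit representation (cf.\ the vertical embedding in Lemma~\ref{lem:embed}) yields a canonical isometric identification $X\cong\varprojlim\hat X_i$. On the other side, the construction of the graded seminorms in Section~2 identifies $\mathbb{G}^{\mathbb{N}}$, as a graded Fr\'echet space, with the projective limit $\varprojlim(\mathbb{G}^i,\norm{\cdot}_i)$ whose bonding maps are exactly the projections $\pi_i\colon\mathbb{G}^i\to\mathbb{G}^{i-1}$, i.e.\ copies of the universal operator $\pi$ under the isometries $(\mathbb{G}^i,\norm{\cdot}_i)\cong\mathbb{G}$.

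Then I would construct by induction a compatible family of isometric embeddings $e_i\colon\hat X_i\to\mathbb{G}^i$ satisfying $\pi\circ e_i=e_{i-1}\circ q_i$. For the base case any isometric embedding $e_1\colon\hat X_1\to\mathbb{G}$ works (the universality of $\mathbb{G}$ for separable Banach spaces being itself a consequence of property $(1)$, applied with $T=0$). For the inductive step the operator $T:=e_{i-1}\circ q_i\colon\hat X_i\to\mathbb{G}^{i-1}\cong\mathbb{G}$ is non-expansive, being a composition of a non-expansive map with an isometry, so property $(1)$ of Theorem~\ref{Thm:Ibuiwdsdoh} furnishes an isometric embedding $e_i\colon\hat X_i\to\mathbb{G}^i\cong\mathbb{G}$ with $\pi\circ e_i=T$, which is precisely the required commutation.

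Finally I would pass to the limit: the compatible family $(e_i)$ induces $\varprojlim e_i\colon\varprojlim\hat X_i\to\varprojlim\mathbb{G}^i=\mathbb{G}^{\mathbb{N}}$, and since $\norm{(\varprojlim e_i)(y)}_j=\norm{e_j(y_j)}=\norm{y}_j$ for every $j$, this is an isometric embedding; composing with $X\cong\varprojlim\hat X_i$ embeds $X$ isometrically into $\mathbb{G}^{\mathbb{N}}$, and completeness of $X$ makes the image closed, giving the desired topological universality. I expect the only genuinely delicate point to be the two projective-limit identifications --- in particular checking that the bonding map of $\mathbb{G}^{\mathbb{N}}$ is literally $\pi$ (so that property $(1)$ applies verbatim) and that $X$ is recovered as the reduced limit of the $\hat X_i$; once these are in place the lifting and the passage to the limit are routine, and no approximation is needed, since property $(1)$ produces exact, not $\eps$-, factorizations.
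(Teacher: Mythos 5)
Your proof is correct and takes essentially the same route as the paper: an inductive application of property $(1)$ of Theorem~\ref{Thm:Ibuiwdsdoh} to lift isometric embeddings $\widetilde{X}_i\to(\mathbb{G}^i,\norm{\cdot}_i)\cong\mathbb{G}$ along the non-expansive canonical maps $X_{i+1}\to X_i$, followed by passage to the limit. Your explicit projective-limit identifications and the reduction to increasing seminorms simply make precise what the paper's coordinate-wise formula $x\mapsto((f_i(\can_i(x)))_i)_{i\in\mathbb{N}}$ leaves implicit.
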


\begin{proof}
  Let $X$ be a separable Fr\'{e}chet space and $\{\|\cdot\|_{i}\}_{i\in\mathbb{N}}$ its fixed sequence of
  semi-norms. For all $i\in\mathbb{N}$ we denote by $X_i$ the normed space $X/\ker\|\cdot\|_{i}$ equipped
  with the norm $\|\cdot\|_{i}$ and by $\widetilde{X_i}$ its completion.
  From the universality of $\mathbb{G}$ we may deduce the existence of an isometric embedding 
  $f_1\colon \widetilde{X}_1\to\mathbb{G}$. 
  Now assume that we have an isometric embedding $f_i\colon\widetilde{X}_i\to \mathbb{G}^{i}$. Note that 
  since $\|x\|_i\leq\|x\|_{i+1}$ for all $x\in X$, the composition $T$ of $f_i$ with the canonical mapping 
  $\can^{i+1}_{i}\colon X_{i+1}\to X_{i}$ is non-expansive. 
  As $\mathbb{G}^{i+1}$, equipped with the semi-norm $\|\cdot\|_{i+1}'$, is isometric to 
  $\mathbb{G}$, we can use property~(1) of the universal projection $\pi_{i+1}$ given in Theorem~\ref{Thm:Ibuiwdsdoh} 
  to find an isometric embedding $f_{i+1}\colon \widetilde{X}_{i+1}\to\mathbb{G}^{i+1}$ so that the diagram
  \begin{center}
    \begin{mycd}
      \mathbb{G}^{i+1} \arrow{r}{\pi_{i+1}} & \mathbb{G}^{i}\\
      X_{i+1} \arrow[hook]{u}{f_i+1} \arrow[swap]{r}{\can^{i+1}_{i}} \arrow[bend right]{ru}{T} & X_{i} \arrow[hook,swap]{u}{f_i}
    \end{mycd}
  \end{center}
  is commutative. Hence,
  \[
  f\colon X\to\mathbb{G}^{\mathbb{N}}, \quad x\mapsto ((f_i(\can_i(x)))_{i})_{i\in\mathbb{N}}
  \]
  is an isometric embedding.
\end{proof}

\section{Final remarks}

Note that by \cite[Proposition~V.5.4]{Rol1972MetricLinearSpaces} the space $\mathcal{C}(\mathbb{R})$ is 
universal for all separable Fr\'{e}chet spaces. 
The following shows that, like in the case of $\mathcal{C}([0,1])$ for Banach spaces, the space 
$\mathcal{C}(\mathbb{R})$ is not of almost universal disposition for finite-dimensional Fr\'{e}chet spaces.

\begin{prop}
  Let $X$ be a hemicompact space and $\{K_i\}_{i\in\mathbb{N}}$ be a sequence of compact sets satisfying
  $K_{i}\subseteq K_{i+1}$ and $\bigcup_{i\in\mathbb{N}}K_i = X$. The space $\mathcal{C}(X)$ equipped with the
  sequence of semi-norms $\|\cdot\|_i$ where $\|f\|_{i} = \sup_{x\in K_i} |f(x)|$ is not of almost
  universal disposition for finite-dimensional Fr\'{e}chet spaces.
\end{prop}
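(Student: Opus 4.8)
The plan is to transplant the classical Banach-space obstruction down to a single seminorm: the constant function $\mathbf 1$ is \emph{rigid under convex splitting} inside each seminorm ball of $\mathcal C(X)$, and this rigidity is incompatible with the ``splitting'' that almost universal disposition forces through an $\ell_\infty^2$-amalgam. To refute the property it suffices to produce \emph{one} $\eps>0$, one inclusion $E\subseteq F$ of finite-dimensional graded Fr\'echet spaces, and one isometric embedding $E\to\mathcal C(X)$ admitting no $\eps$-isometric extension to $F$.

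First I would fix this data. Since $X\neq\emptyset$, choose $i_0$ with $K_{i_0}\neq\emptyset$. Let $E=\mathbb R$ with seminorms $\norm t_i=|t|$ and let $F=\mathbb R^2$ carry the (constant, hence graded) sequence $\norm{(a,b)}_i=\max(|a|,|b|)$, truncating both sequences to $0$ at any index $i$ with $K_i=\emptyset$ so that matching is possible; both are then genuine finite-dimensional graded Fr\'echet spaces and the inclusion $E\hookrightarrow F$, $t\mapsto(t,0)$, is isometric. Define $\map e E {\mathcal C(X)}$ by $e(t)=t\cdot\mathbf 1$; since $\sup_{K_i}|t|=|t|$ whenever $K_i\neq\emptyset$, the map $e$ is an isometric embedding. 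Finally fix $\eps=\tfrac12$.

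The main step is a contradiction. Suppose an $\eps$-isometric embedding $\map f F {\mathcal C(X)}$ with $f\restriction E=e$ existed, and put $z_1=f(1,1)$, $z_2=f(1,-1)$. Linearity gives $z_1+z_2=2f(1,0)=2\mathbf 1$ and $z_1-z_2=f(0,2)$, so the $\eps$-isometry estimates at the index $i_0$ yield $\norm{z_j}_{i_0}\le 1+\eps$ and $\norm{z_1-z_2}_{i_0}\ge 2/(1+\eps)$. On the other hand, evaluating at any $x\in K_{i_0}$, the identity $z_1(x)+z_2(x)=2$ together with $|z_j(x)|\le\norm{z_j}_{i_0}\le 1+\eps$ pins both values into $[1-\eps,\,1+\eps]$, whence $|z_1(x)-z_2(x)|=2|z_1(x)-1|\le 2\eps$ and therefore $\norm{z_1-z_2}_{i_0}\le 2\eps$. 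Comparing the two bounds forces $2/(1+\eps)\le 2\eps$, i.e.\ $1\le\eps+\eps^2$, which fails for $\eps=\tfrac12$.

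The routine estimates above are immediate; the only real content is the \emph{choice} of witnesses, and this is the step I expect to be the main obstacle. The creative observation is that the order unit $\mathbf 1$ cannot be the midpoint of two near-unit functions that are far apart, because on any nonempty $K_i$ the constraint $z_1+z_2=2\mathbf 1$ with $\norm{z_j}_i\le 1+\eps$ is pointwise rigid; one must then pair this with the fact that $(1,0)\in\ell_\infty^2$ \emph{is} such a midpoint of $(1,\pm1)$. Any amalgam that presents a splittable image of such a rigid vector would serve, but $\ell_\infty^2$ is the simplest. No further difficulty is anticipated: the passage from Banach to graded Fr\'echet costs nothing, since the Fr\'echet $\eps$-isometry inequalities are imposed simultaneously at every index and we exploit only the single seminorm $\norm\cdot_{i_0}$. (If $X=\emptyset$ then $\mathcal C(X)=\{0\}$ admits no isometric copy of $\mathbb R$ and the property fails trivially.)
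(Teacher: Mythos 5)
Your proof is correct, and it takes a genuinely different route from the paper. The paper argues top--down: assuming $\mathcal{C}(X)$ is of almost universal disposition, its uniqueness result (Proposition~\ref{prop:ApprIsomUnordered}) yields an isometry $\mathcal{C}(X)\cong\mathbb{G}^{\mathbb{N}}$, hence $\mathcal{C}(K_i)=\mathcal{C}(X)/\ker\norm{\cdot}_i\cong\mathbb{G}$ for every $i$, contradicting the known fact that no $\mathcal{C}(K)$-space is a Gurari\u{\i} space \cite{ACC2011:BanachSpaceUD}. You instead refute the extension property directly with explicit witnesses: the constant function $\mathbf 1$ is strongly extreme in each seminorm ball --- it cannot be the midpoint of two functions of $i_0$-th seminorm at most $1+\eps$ lying more than $2\eps$ apart, since $z_1(x)+z_2(x)=2$ and $|z_j(x)|\le 1+\eps$ pin both values into $[1-\eps,1+\eps]$ --- whereas $(1,0)$ is exactly such a midpoint of $(1,\pm 1)$ in $\ell_\infty^2$; with $\eps=\tfrac12$ this forces $\tfrac43\le\norm{z_1-z_2}_{i_0}\le 1$, a contradiction, and your handling of empty $K_i$ and of the (constant, hence graded) seminorm sequences is sound. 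One small caveat: the pointwise pinning uses real scalars; over $\mathbb{C}$ one only gets $|z_1(x)-1|\le\sqrt{2\eps+\eps^2}$, so $\eps$ must be taken smaller, which costs nothing since refuting the property needs only one $\eps$. As for what each approach buys: your argument is elementary, quantitative and self-contained --- in effect it reproves the needed special case of \cite{ACC2011:BanachSpaceUD} localized at a single seminorm --- and it bypasses Proposition~\ref{prop:ApprIsomUnordered} entirely, whose proof implicitly requires separability of the ambient space (i.e.\ metrizability of the $K_i$), so your version is actually slightly more general; the paper's route, by contrast, is a two-line corollary of machinery already developed there and additionally identifies the quotients $\mathcal{C}(K_i)$ exactly, information the direct counterexample does not provide.
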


\begin{proof}
  Assume for a contradiction that $\mathcal{C}(X)$ is a of almost universal disposition for finite-dimensional Fr\'{e}chet spaces.
  By Proposition~\ref{prop:ApprIsomUnordered} we deduce that $\mathcal{C}(X)\cong \mathbb{G}^{\mathbb{N}}$
  holds isometrically. Hence for all $i\in\mathbb{N}$ we obtain $\mathcal{C}(X)/\ker\|\cdot\|_{i}\cong
  \mathbb{G}$ isometrically. Observe that $\mathcal{C}(X)/\ker\|\cdot\|_{i}=\mathcal{C}(K_i)$. This shows that  $\mathcal{C}(K)$-space is a Gurari\u{\i} space, a contradiction with \cite[Corollary~5.4]{ACC2011:BanachSpaceUD}.
\end{proof}

\begin{prop}
  There is no separable Fr\'{e}chet space which is of universal disposition for finite-dimensional
  Fr\'{e}chet spaces.
\end{prop}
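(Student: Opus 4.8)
The plan is to reduce this impossibility to the Banach space case, where it is already settled: Gurari\u{\i} observed \cite{Gur1966UniversalPlacement} that no separable Banach space satisfies condition~(G) with $\varepsilon$ removed. So I argue by contradiction. Assume $E$ is a separable graded Fr\'echet space, with its fixed increasing sequence $\{\|\cdot\|_i\}_{i\in\mathbb{N}}$, which is of universal disposition for finite-dimensional Fr\'echet spaces. Since exact universal disposition trivially implies almost universal disposition, $E$ is in particular of almost universal disposition for finite-dimensional graded Fr\'echet spaces; as we have already shown that $\mathbb{G}^{\mathbb{N}}$ has this property, Proposition~\ref{prop:ApprIsomUnordered} (applied with $X=\{0\}$) produces a bijective isometry $E\cong\mathbb{G}^{\mathbb{N}}$. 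Consequently the first local space $E_1:=E/\ker\|\cdot\|_1$, normed by the descended norm $\|\cdot\|_1$, is isometric to the Gurari\u{\i} space $\mathbb{G}$, a genuine separable Banach space. This is where I will produce \emph{exact} isometric extensions and reach a contradiction.

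The core of the argument is to encode an arbitrary finite-dimensional Banach extension problem at level~$1$ as a finite-dimensional Fr\'echet extension problem in $E$. So let $A\subseteq B$ be finite-dimensional Banach spaces and let $e\colon A\to E_1=\mathbb{G}$ be an isometric embedding. First I would choose any linear section $s$ of the quotient map $E\to E_1$ over $e(A)$. Because the norm on $E_1$ is simply the semi-norm $\|\cdot\|_1$ evaluated on cosets, \emph{every} such lift is automatically level-$1$ isometric, i.e. $\|s(e(a))\|_1=\|e(a)\|_1=\|a\|_A$; no attainment of an infimum is needed. Equipping $\hat A:=s(e(A))\subseteq E$ with the semi-norms inherited from $E$ turns it into a finite-dimensional graded Fr\'echet space whose inclusion into $E$ is isometric and whose first local part is exactly $e$. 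I then extend $\hat A$ to a finite-dimensional graded Fr\'echet space $\hat B$ whose underlying vector space is $B$, whose first semi-norm is the prescribed norm $\|\cdot\|_B$, and whose higher semi-norms extend, monotonically in $i$, the norms already present on $\hat A$.

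Applying the assumed universal disposition of $E$ to the pair $\hat A\subseteq\hat B$ and the isometric inclusion $\hat A\hookrightarrow E$ yields an exact isometric embedding $\hat f\colon\hat B\to E$ extending it. Passing to first local spaces by Lemma~\ref{lem:embed}, and observing that $(\hat B)_1=B$ and $(\hat A)_1=A$ since $\|\cdot\|_B$ is already a norm, the induced map $(\hat f)_1\colon B\to E_1=\mathbb{G}$ is an exact isometric embedding extending $e$. As $A\subseteq B$ and $e$ were arbitrary, $\mathbb{G}$ would be a separable Banach space of universal disposition for finite-dimensional Banach spaces, contradicting \cite{Gur1966UniversalPlacement}. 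The main obstacle is the construction of $\hat B$: extending the increasing sequence of norms from $A$ to all of $B$ so that simultaneously the first semi-norm equals $\|\cdot\|_B$, monotonicity $\|\cdot\|_i^{\hat B}\le\|\cdot\|_{i+1}^{\hat B}$ is preserved on $B$, and each restricts on $A$ to the inherited norm. This is an elementary but slightly delicate finite-dimensional inductive norm-extension. A secondary point to check is that no loss occurs in assuming the semi-norm sequence of $E$ is increasing, and that it is precisely \emph{exactness}, not mere $\varepsilon$-isometry, that survives the passage to the first local space---which is exactly what makes the Banach obstruction bite.
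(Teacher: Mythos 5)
Your proposal is correct in its overall strategy and is essentially sound, but it implements the key reduction differently from the paper. You share the paper's skeleton: universal disposition trivially implies almost universal disposition, Proposition~\ref{prop:ApprIsomUnordered} then identifies the hypothetical space isometrically with $\mathbb{G}^{\mathbb{N}}$, and the contradiction comes from the non-existence of a separable Banach space of universal disposition (\cite[Proposition~5.1]{GK11:Gurarii}, going back to Gurari\u{\i}'s observation in \cite{Gur1966UniversalPlacement}). Where you diverge is in encoding a Banach extension problem $A\subseteq B$, $e\colon A\to E_1\cong\mathbb{G}$, as a Fr\'echet one: you lift $e(A)$ into $E$ by a linear section of the quotient $E\to E_1$ (correctly noting every lift is level-$1$ isometric) and then must manufacture an entire increasing sequence of norms on $B$ extending the inherited ones on $\hat A$ --- the step you flag as ``elementary but slightly delicate'' and leave unproved. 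The paper sidesteps this construction entirely: it equips $X\subseteq Y$ with the \emph{constant} semi-norm sequences $\|\cdot\|_{X,i}:=\|\cdot\|_X$ and $\|\cdot\|_{Y,i}:=\|\cdot\|_Y$ (which are trivially increasing, so no extension lemma is needed), embeds $X$ into $\mathbb{G}^{\mathbb{N}}$ by the product mapping $x\mapsto\{f(x)\}_{i\in\mathbb{N}}$, extends by the assumed universal disposition, and composes with the quotient onto $\mathbb{G}^{\mathbb{N}}/\ker\|\cdot\|_1=\mathbb{G}$. Your missing step is genuinely fillable by a routine induction in the spirit of Lemma~\ref{lem:PushOut}: set $\|b\|_1^{\hat B}=\|b\|_B$ and, for $i\geq 2$, define $\|b\|_i^{\hat B}=\inf_{a\in A}\bigl\{\|a\|_i^{\hat A}+C_i\|b-a\|_{i-1}^{\hat B}\bigr\}$ with $C_i\geq 1$ chosen (by equivalence of norms in finite dimensions) so that $\|\cdot\|_i^{\hat A}\leq C_i\|\cdot\|_{i-1}^{\hat A}$ on $A$; one checks monotonicity in $i$ and that the restriction to $A$ recovers $\|\cdot\|_i^{\hat A}$. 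So nothing in your route fails; it is simply heavier than necessary, and in exchange it makes explicit, via Lemma~\ref{lem:embed}, exactly why isometric exactness (and not merely an $\varepsilon$-isometry) survives the passage to the first local space --- a point the paper treats tersely. Your closing caution about gradedness is warranted but applies equally to the paper's own proof, which also feeds an arbitrary Fr\'echet space into the uniqueness proposition stated for graded spaces; replacing a semi-norm sequence by the increasing sequence of maxima preserves isometric embeddings in one direction only, so this hypothesis deserves the scrutiny you give it.
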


\begin{proof}
  Assume, for a contradiction, that $F$ is a separable Fr\'{e}chet space of universal disposition for
  finite-dimensional Fr\'{e}chet spaces. Hence $F$ is also of almost universal disposition for
  finite-dimensional Fr\'{e}chet spaces, and hence by Proposition~\ref{prop:ApprIsomUnordered}
  isometrically isomorphic to $\mathbb{G}^{\mathbb{N}}$. Therefore it is sufficient to show that
  $\mathbb{G}^{\mathbb{N}}$ is not of universal disposition.

  Let $X\subseteq Y$ be finite-dimensional Banach spaces. Setting $\|\cdot\|_{X,i}:=\|\cdot\|_{X}$ and
  $\|\cdot\|_{Y,i}:=\|\cdot\|_{Y}$ for all $i\in\mathbb{N}$, we obtain two finite-dimensional
  Fr\'{e}chet spaces. Assume, for a contradiction, that $\mathbb{G}^{\mathbb{N}}$ is of universal
  disposition for finite-dimensional Fr\'{e}chet spaces. Given an isometric embedding
  $f\colon (X,\|\cdot\|_{X})\to\mathbb{G}$ the product mapping
  \[
  X\to \mathbb{G}^{\mathbb{N}}, x\mapsto \{f(x)\}_{i\in\mathbb{N}}
  \]
  is an isometric embedding of Fr\'{e}chet spaces. Hence there would exist an isometric extension
  $g\colon Y\to \mathbb{G}^{\mathbb{N}}$. Therefore also the mapping $Y\to\mathbb{G}, y\mapsto (g(y))_1$
  is an isometry since $\mathbb{G}=\mathbb{G}^{\mathbb{N}}/\ker \|\cdot\|_1$ and it extends $f$. This
  would mean that $\mathbb{G}$ is a separable Banach space of universal disposition for finite-dimensional
  Banach spaces, in contradiction to \cite[Proposition~5.1]{GK11:Gurarii}.
\end{proof}

We conclude the paper with the construction of a sequence of semi-norms on $\mathbb{G}^{\mathbb{N}}$ under
which it is of almost universal disposition for Fr\'{e}chet spaces with a fixed but not necessarily 
increasing sequence of semi-norms.

For this we can use the semi-norms coming from the coordinates, namely, for each $n\in\mathbb{N}$
define $\|x\|_{n}' = \|x(n)\|_{\mathbb{G}}$, $x \in \mathbb{G}^{\mathbb{N}}$, where $\|\cdot\|_{\mathbb{G}}$
is the norm of the Gurari\u{\i} space. We will not prove that this space is unique and universal as
these proofs follow the lines of the corresponding ones for graded Fr\'{e}chet spaces.

In order to shorten the notation, we denote by $X_i:= X/\ker\|\cdot\|_i$ equipped with the norm
$\|\cdot\|_i$ and by $Y_i$ the corresponding quotient of $Y$.

In order to show that $\mathbb{G}^{\mathbb{N}}$ with these semi-norms is of almost universal disposition, 
we need the following

\begin{lemma}\label{lem:eps_embed}
  Let $f_0\colon X \to \mathbb{G}^\mathbb{N}$ be an ($\varepsilon$-)isometric embedding. For all
  $i\in\mathbb{N}$ the mapping
  \begin{displaymath}
    f_0^i\colon X_i \rightarrow \mathbb{G}, \bar{x}\mapsto (f_0(x))_i
  \end{displaymath}
  is an ($\varepsilon$-)isometric embedding and the diagram
  \begin{center}
    \begin{mycd}
      \prod (X/\ker\|\cdot\|_i) \arrow{r}{\prod f_0^i} & \mathbb{G}^\mathbb{N} \\
      X \arrow[hookrightarrow]{u}\arrow[swap]{ru}{f_0}&
    \end{mycd}
  \end{center}
  is commutative.
\end{lemma}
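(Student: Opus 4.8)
The plan is to recognize this statement as the coordinate-semi-norm counterpart of Lemma~\ref{lem:embed}, and to verify it by the same quotient computation while also tracking the factor $(1+\varepsilon)$. The structural fact I would record first is that, for the semi-norms $\|\cdot\|_i'$, the $i$-th quotient of the codomain is canonically and isometrically $\mathbb{G}$. Indeed, since $\|x\|_i' = \|x(i)\|_{\mathbb{G}}$, we have $\ker\|\cdot\|_i' = \{x\in\mathbb{G}^{\mathbb{N}}\colon x(i)=0\}$, and the $i$-th coordinate map $x\mapsto x(i)$ is a surjective linear map with this kernel which preserves norms; it therefore descends to an isometric isomorphism $\mathbb{G}^{\mathbb{N}}/\ker\|\cdot\|_i' \cong \mathbb{G}$. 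Under this identification $f_0^i$ is precisely the map induced by $f_0$ on the $i$-th quotients, so the statement is exactly the analogue of Lemma~\ref{lem:embed} for the (non-increasing) coordinate semi-norms.

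Next I would check that $f_0^i$ is well defined and satisfies the required estimates. If $\bar x = \bar y$ in $X_i$, then $\|x-y\|_i = 0$, so the ($\varepsilon$-)isometry inequality for $f_0$ applied with the $i$-th semi-norm gives $\|(f_0(x-y))(i)\|_{\mathbb{G}} = \|f_0(x-y)\|_i' \leq (1+\varepsilon)\|x-y\|_i = 0$; linearity of $f_0$ then forces $(f_0(x))(i) = (f_0(y))(i)$, so $f_0^i$ is well defined. For the norm estimates I would simply read off $\|f_0^i(\bar x)\|_{\mathbb{G}} = \|(f_0(x))(i)\|_{\mathbb{G}} = \|f_0(x)\|_i'$ together with $\|\bar x\|_i = \|x\|_i$, so that the two inequalities defining an ($\varepsilon$-)isometric embedding for $f_0$ (at the index $i$) transfer verbatim to $f_0^i$ as a map into the Banach space $\mathbb{G}$. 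Since an ($\varepsilon$-)isometry between normed spaces is automatically injective, continuous, and open onto its image, $f_0^i$ is an embedding.

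Finally I would verify commutativity of the triangle. The unlabelled vertical arrow is the canonical diagonal map $X\hookrightarrow \prod_i X_i$, $x\mapsto (\bar x)_i$, which is injective precisely because the family $\{\|\cdot\|_i\}$ separates the points of the Fr\'echet space $X$. Composing with $\prod_i f_0^i$ yields $(f_0^i(\bar x))_i = ((f_0(x))(i))_i = f_0(x)$, so the diagram commutes by the very definition of $f_0^i$. Since every step is a direct translation of the single $i$-th semi-norm condition, there is no genuine obstacle here; the only points meriting care are the well-definedness on the quotient $X_i$, which is exactly where the upper isometry bound is used, and the fact that the diagonal map is an embedding, which relies on the semi-norms being separating.
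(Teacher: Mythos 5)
Your proof is correct and follows essentially the same route as the paper: well-definedness of $f_0^i$ via the upper estimate $\|(f_0(x_1-x_2))_i\| \leq (1+\varepsilon)\|x_1-x_2\|_i = 0$, the ($\varepsilon$-)isometry property transferring directly because $\|f_0^i(\bar{x})\|_{\mathbb{G}} = \|f_0(x)\|_i'$ and $\|\bar{x}\|_i = \|x\|_i$, and commutativity holding by the very definition of $f_0^i$. The extra framing you add (the isometric identification $\mathbb{G}^{\mathbb{N}}/\ker\|\cdot\|_i' \cong \mathbb{G}$ linking this to Lemma~\ref{lem:embed}, and the injectivity of the diagonal map) is sound but not needed beyond what the paper's terser argument already establishes.
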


\begin{proof}
  First we show that
  \begin{displaymath}
    f_0^i\colon X_i \rightarrow \mathbb{G}, \bar{x}\mapsto (f_0(x))_i
  \end{displaymath}
  is well-defined. Let $x_1,x_2\in X$ such that $\|x_1-x_2\|_{i}=0$. Then
  \[
    \|(f_0(x_1))_i-(f_0(x_2))_i\| = \|(f_0(x_1-x_2))_i\| \leq (1+\varepsilon) \|x_1-x_2\|_i = 0
  \]
  and hence $(f_0(x_1))_i = (f_0(x_2))_i$, i.e. the mapping is well-defined. It is by definition an
  ($\varepsilon$-)isometric embedding. Finally note that the identity $(f_0(x))_i=f_0^i(\bar{x}^{\|\cdot\|_i})$
  for $i\in\mathbb{N}$ follows analogously.
\end{proof}

\begin{prop}
  Let $(X,\{\|\cdot\|_i\}_{i\in\mathbb{N}})$ and $(Y,\{\|\cdot\|_i\}_{i\in\mathbb{N}})$ be finite-dimensional Fr\'{e}chet spaces with fixed semi-norms, $\varepsilon>0$, $\iota\colon X\to Y$ and
  $f_0\colon X\to\mathbb{G}^\mathbb{N}$ isometric embeddings. Then there is an
  $\varepsilon$-isometric embedding $f\colon Y\to \mathbb{G}^\mathbb{N}$ such that the diagram
  \begin{center}
    \begin{mycd}
      X \arrow{r}{f_0} \arrow[swap]{d}{\iota} & \mathbb{G}^\mathbb{N} \\
      Y  \arrow[swap]{ru}{f} &
    \end{mycd}
  \end{center}
  is commutative.
\end{prop}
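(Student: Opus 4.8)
The plan is to exploit the feature that distinguishes these coordinate semi-norms $\|\cdot\|_n'$ from the graded semi-norms of the previous sections: the value $\|x\|_n' = \|x(n)\|_{\mathbb{G}}$ depends only on the $n$-th coordinate, so the semi-norms are mutually independent. Consequently the extension problem decouples completely across coordinates, and in each single coordinate it is solved directly by the almost universal disposition (condition~(G) of the introduction) of the Gurari\u{\i} space itself. In particular, neither the universal operator $\pi$ nor the pushout of Lemma~\ref{lem:PushOut} is needed; a single application of~(G) per coordinate will suffice.

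First I would pass to the coordinate quotients. By Lemma~\ref{lem:embed}, the isometric embedding $\iota\colon X\to Y$ induces for each $i\in\mathbb{N}$ an isometric embedding $\iota_i\colon X_i\to Y_i$ of finite-dimensional normed spaces, so that $X_i$ may be identified with a subspace of $Y_i$. By Lemma~\ref{lem:eps_embed}, the isometric embedding $f_0$ induces isometric embeddings $f_0^i\colon X_i\to\mathbb{G}$ satisfying $(f_0(x))_i = f_0^i(\bar x)$ for all $x\in X$. Then, for each $i$, I would apply condition~(G) to the pair $X_i\subseteq Y_i$ of finite-dimensional normed spaces together with the isometric embedding $f_0^i\colon X_i\to\mathbb{G}$; since (G) provides an \emph{exact} extension, this yields an $\varepsilon$-isometric embedding $f^i\colon Y_i\to\mathbb{G}$ with $f^i\circ\iota_i = f_0^i$.

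Finally I would assemble the coordinate maps into a single map $f\colon Y\to\mathbb{G}^{\mathbb{N}}$ by setting $f(y) = (f^i(\bar y))_{i\in\mathbb{N}}$, where $\bar y$ denotes the class of $y$ in $Y_i$. Linearity is immediate, as each $f^i$ is linear. For every $i$ and every $y\in Y$ one has $\|f(y)\|_i' = \|f^i(\bar y)\|_{\mathbb{G}}$, so the $\varepsilon$-isometry estimates for $f^i$ transfer verbatim to the defining inequalities of Definition~\ref{def:epsIsom} for $f$, using $\|\bar y\|_i = \|y\|_i'$; the lower estimate together with the Hausdorff property of $Y$ gives injectivity, and the two-sided estimate makes $f$ a homeomorphism onto its image, so $f$ is an embedding. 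Commutativity $f\circ\iota = f_0$ is checked coordinatewise: for $x\in X$ one computes $(f(\iota(x)))_i = f^i(\overline{\iota(x)}) = f^i(\iota_i(\bar x)) = f_0^i(\bar x) = (f_0(x))_i$ for all $i$, whence $f\circ\iota = f_0$.

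I do not expect a genuine obstacle here; the essential simplification relative to the graded case is precisely that the coordinates do not interact, so that no compatibility between consecutive levels (and hence no use of the projections $\pi_n$) is required. The only point demanding care is the bookkeeping with the induced quotient maps: one must keep track of the identifications $\overline{\iota(x)} = \iota_i(\bar x)$ coming from Lemma~\ref{lem:embed} and $(f_0(x))_i = f_0^i(\bar x)$ coming from Lemma~\ref{lem:eps_embed}, so that the coordinatewise extensions glue into a genuinely commuting triangle rather than one that merely commutes up to the $\varepsilon$-error tolerated in each $f^i$.
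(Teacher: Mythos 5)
Your proposal is correct and takes essentially the same route as the paper's own proof: both pass to the coordinate quotients $X_i \subseteq Y_i$ via Lemmas~\ref{lem:embed} and~\ref{lem:eps_embed}, extend each $f_0^i$ to $Y_i$ by a single application of the almost universal disposition of $\mathbb{G}$, and reassemble the coordinate maps into $f$ with the estimates and the commutativity checked coordinatewise. Your opening observation that the coordinate semi-norms decouple the problem, so that neither the universal operator $\pi$ nor Lemma~\ref{lem:PushOut} is needed, is precisely the mechanism the paper exploits.
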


\begin{proof}
  As $X$ and $Y$ are finite-dimensional, the quotient spaces $X_i$ and $Y_i$ are finite-dimensional Banach 
  spaces for all $i\in\mathbb{N}$.
  By Lemma~\ref{lem:embed} the mappings $\iota_i\colon X_i \to Y_i$ are isometric embeddings. The same is 
  true for the mappings $f_0^i\colon  X_i\to\mathbb{G}$ by Lemma~\ref{lem:eps_embed}. As the Gurari\u{\i} 
  space is of almost universal disposition for finite-dimensional Banach spaces, there is an
  $\varepsilon$-isometric embedding $f_i\colon Y_i\to\mathbb{G}$ making the diagram
  \begin{center}
    \begin{mycd}
      X_i \arrow{r}{f_0^i} \arrow[swap]{d}{\iota_i} & \mathbb{G}\\
      Y_i \arrow[swap]{ru}{f_i}&
    \end{mycd}
  \end{center}
  commutative. For $y\in Y$, we set $f(y) = \{f_i(\overline{y}^{\|\cdot\|_i})\}_{i\in\mathbb{N}}$, i.e. $f$ is 
  defined so that
  \begin{center}
    \begin{mycd}
      \prod Y_i \arrow{r}{\prod f_i} & \mathbb{G}^\mathbb{N} \\
      Y \arrow[hookrightarrow]{u} \arrow[swap]{ru}{f} &
    \end{mycd}
  \end{center}
  is commutative. Since $f_i$ is an $\varepsilon$-isometric embedding for all $i\in\mathbb{N}$, we get
  \begin{displaymath}
    \|f(y)\|_i' = \|(f(y))_i\| = \|f_i(\overline{y}^{\|\cdot\|_i})\|
    \leq (1+\varepsilon) \|\bar{y}\|_i = (1+\varepsilon) \|y\|_i
  \end{displaymath}
  and by an analogous computation $\|f(y)\|_i' \geq  (1+\varepsilon)^{-1} \|y\|_i$, i.e. $f$ is an 
  $\varepsilon$-isometric embedding. Now let $x\in X$, we have
  \begin{displaymath}
    (f(\iota(x)))_i = f_i\Big(\overline{\iota(x)}^{\|\cdot\|_i}\Big) = f_i(\iota_i(\bar{x}))
    = f_0^i(\bar{x}) = (f_0(x))_i.
  \end{displaymath}
  Hence $f(\iota(x))=f_0(x)$, i.e. $f\restriction X = f_0$.
\end{proof}

\begin{rem}
  Note that in both cases $\mathbb{G}^{\mathbb{N}}/\ker\|\cdot\|_n = \mathbb{G}^{n}$. Therefore all
  neighbourhoods of zero contain straight lines. This means in other words that there is no
  continuous norm on the space $\mathbb{G}^{\mathbb{N}}$ equipped with either of the sequences of
  semi-norms.
\end{rem}

\bibliographystyle{abbrv}
\bibliography{literature}

\end{document}